\title{The Generic Bipartite Graphs of Diameter $3$: Their Ages and their almost sure Theories}
\author{Rebecca Coulson}
\date{April 2020}
\newtheorem{lemma}{Lemma}[section]
\newtheorem{corlemma}{Corollary}[lemma]
\newtheorem{obs}{Observation}[section]
\newtheorem{maintheorem}{Theorem}
\newtheorem{thm}{Theorem}[section]
\newtheorem{defn}{Definition}[section]
\newtheoremstyle{noperiod}
{3pt}
{3pt}
{\em}
{}
{\bf}
{}
{.2em}
{}
\theoremstyle{noperiod}
\newcommand{\firstaclass}{\mathcal{A}^3_{\infty,0,7,8}}
\newcommand{\secondaclass}{\mathcal{A}^3_{\infty,0,7,10}}
\newcommand{\firstGamma}{\Gamma^{3}_{\infty,0,7,8}}
\newcommand{\secondGamma}{\Gamma^{3}_{\infty,0,7,10}}
\newcommand{\Frse}{Fra\"{i}ss\'{e} }
\newcommand{\Th}{$Th$}
\newcommand*\pFqskip{8mu}
\newcommand*\pFq{\begingroup
        \catcode`\,\active
        \def ,{\mskip\pFqskip\relax}%
        \dopFq
}
\def\dopFq#1#2#3#4#5#6{%
        {}_{#1}F_{#2}\bigl[#3,#4;#5;#6\bigr]%
        \endgroup
}
\begin{document}

\maketitle

\begin{abstract}
    In an effort to find more examples of amalgamation classes whose almost sure theories are the same as their generic theories as well as amalgamation classes whose almost sure theories are different from their generic theories, we address our attention to two new cases: the bipartite diameter $3$ metrically homogeneous graphs of generic type. These graphs were identified by Cherlin in \cite{CheCat}, and are determined by certain forbidden configurations. In this paper, we explicitly identify and enumerate their ages, for which we then establish both unlabeled and labeled 0--1 laws. Finally, we show that for one of these bipartite graphs the almost sure theory matches its generic theory, and for the other bipartite graph it does not.
\end{abstract}

\section{Introduction}
It is a well-known result that the class $\mathcal{G}$ of all finite graphs satisfies a \textit{0--1 law}: for any first-order sentence $\phi$ in the language of graphs, $$\lim_{n \rightarrow \infty} \frac{|G \in \mathcal{G}_n \colon G \models \phi|}{|\mathcal{G}_n|} \in \{0,1\},$$ where $\mathcal{G}_n$ is the set of finite graphs on $n$ vertices \cite{Fagin}. Thus we may define the \textit{almost sure theory} of $\mathcal{G}$ to be the collection of sentences whose corresponding limit is $1$. The class $\mathcal{G}$ forms an amalgamation class, and its \Frse limit is the Rado graph. The theory of the Rado graph, the \textit{generic theory} of $\mathcal{G}$, is the same as the almost sure theory of $\mathcal{G}$.

On the other hand, the amalgamation class $\mathcal{G}_{\bigtriangleup}$ of finite triangle-free graphs also satisfies a 0--1 law, but its almost sure theory does \textit{not} match the theory of its \Frse limit, the generic triangle-free graph. Almost every large triangle-free graph is bipartite---that is, it omits odd cycles---while the generic triangle-free graph is not bipartite. This behavior continues for $K_n$-free graphs generally: the class of $K_n$-free graphs for a fixed finite $n$ forms an amalgamation class with a 0--1 law, and its almost sure theory is $(n-1)$-partite, while its generic theory is not \cite{KPR}.

Thus we have the following question.\\

\textbf{Big Question:} When does the almost sure theory of an amalgamation class match the generic theory of its \Frse limit?\\

Aspects and specific instances of this question have been studied (see \cite{Ahlman}, \cite{Hill}, and \cite{Kruck} for some) but the general question remains open.




We address our question to two other graph \Frse limits determined by forbidden configurations, though here their amalgamation classes consist of finite integer-valued metric spaces. Specifically, we will be examining the two diameter $3$ bipartite metrically homogeneous graphs of generic type.

\textit{Throughout this article, we will use $\mathcal{A}$ to refer to amalgamation classes and $\Gamma$ to refer to \Frse limits.}

\subsection{Metrically homogeneous graphs of generic type}
Homogeneous graphs are graphs in which every finite partial automorphism can be extended to a full automorphism. The finite and countably infinite homogeneous graphs were classified in \cite{Gard,Shee,LaW-HG}. Metrically homogeneous graphs comprise a larger class of graphs---these are graphs in which every finite partial \textit{isometry}, using the path metric, can be extended to a full isometry.

The finite metrically homogeneous graphs were classified by Cameron in \cite{Cam_TRANS}. Cherlin has a tentative catalog of all countable metrically homogeneous graphs \cite{CheCat}. A large class of these are metrically homogeneous graphs of ``generic type." All of the known metrically homogeneous graphs of generic type are themselves \Frse limits, though of finite integer-valued metric spaces which omit certain ``metric triangles" as well as certain technical ``Henson constraints."

By \emph{metric triangles} we mean triples of distances $(d_1,d_2,d_3)$ for which there are three vertices $v_1,v_2,v_3$ where $d(v_1,v_2) = d_1,$ $d(v_1,v_3) = d_2$, and $d(v_2,v_3) = d_3$. The forbidden triangles for a given metrically homogeneous graph of generic type are determined by five parameters $(\delta,K_1,K_2,C,C')$, where
\begin{itemize}
\item $\delta$ is the diameter, i.e. is the max distance between two vertices;
\item $K_1$ is the smallest $k$ for which the metric triangle $(1,k,k)$ is realized;
\item $K_2$ is the largest $k$ for which the metric triangle $(1,k,k)$ is realized;
\item $C_0,C_1$ are the smallest even, odd numbers respectively such that there are no triangles of even/odd perimeter at least $C_0$, $C_1$; $C = \min(C_0,C_1)$, and $C' = \max(C_0,C_1)$.
\end{itemize}
We refer to such a metrically homogeneous graph as $\Gamma^{\delta}_{K_1,K_2,C,C'}$ and its amalgamation class as $\mathcal{A}^{\delta}_{K_1,K_2,C,C'}$

Some metrically homogeneous graphs of generic type have additional \emph{Henson constraints} $\mathcal{S}$ which consist of forbidden spaces only taking distances in $\{1,\delta\}$. The metrically homogeneous graphs we consider in this article have $\mathcal{S} = \emptyset$ and thus we will restrict our attention to forbidden triangles.

We work within an integer-valued metric space in which distances are either $1,2$ or 3; we view these spaces as complete graphs with edges weighted $1,2$ or $3$. We refer to an edge of weight $i$ as an \emph{$i$-edge}. A collection of vertices all of which are pairwise connected via $i$-edges will be called an \emph{$i$-clique}.

The metrically homogeneous graphs of diameter $3$ were classified by Amato, Cherlin, and MacPherson in \cite{ACM-MH3}. We examine here the two bipartite metrically homogeneous graphs of diameter $3$:
\begin{align*}
    \firstGamma && \text{ and } && \secondGamma.
\end{align*}

In Section \ref{sec:expId}, we explicitly identify their ages and enumerate them both up to isomorphism and generally. 
In Section \ref{sec:idGams}, we discuss the global structure of $\firstGamma$ and $\secondGamma$. In Section \ref{sec:01laws}, we establish both unlabeled and labeled 0--1 laws for $\firstaclass$ and $\secondaclass$.
Finally, in Section \ref{sec:asNgen}, we show that the almost sure theory and generic theory diverge for $\firstaclass$, but converge for $\secondaclass$. In Section \ref{sec:furtherwork}, we conclude with some open questions.

\subsection{Results}

In \cite{ACM-MH3}, Amato, Cherlin, and MacPherson classified all the metrically homogeneous graphs of diameter $3$. There are $13$ metrically homogeneous graphs of generic type of diameter $3$.

The only two bipartite metrically homogeneous graphs of generic type of diameter $3$ are $\firstGamma$ and $\secondGamma$. 
Our first main results are the explicit classifications and enumerations of all the spaces in $\firstaclass$ and of all the spaces in $\secondaclass$.

\begin{maintheorem}\label{thm:theMSs:firstclass}
Let $A$ be a metric space on $n \geq 3$ vertices in $\firstaclass$. Then $A$ must consist of two $2$-cliques $Q_1,Q_2$ on $k$ vertices and $n-k$ vertices respectively, for any $0 \leq k \leq n-k$, in which each of the vertices from $Q_1$ have either no or one $3$-edge into $Q_2$, and the other connecting edges are $1$-edges. 
For a fixed $k$, there are up to isomorphism exactly $k+1$ possible clique pairs.
This gives us that there are up to isomorphism $$\frac{\big\lfloor \frac{n}{2} \big\rfloor ^2}{2} + \frac{3 \big\lfloor \frac{n}{2} \big\rfloor}{2} + 1$$ metric 
spaces on $n$ vertices in $\firstaclass$.

Furthermore, when counting not up to isomorphism, there are
\begin{gather*}
\sum_{k=0}^{\lfloor \frac{n}{2} \rfloor}{ \sum_{j=0}^k {\binom{n}{k} \binom{k}{j} \binom{n-k}{j} j! } }\\
= \sum_{k=0}^{\lfloor \frac{n}{2} \rfloor} (-1)^k \binom{n}{k}U(-k,n-2k+1,-1) 
\end{gather*}
metric spaces on $n$ vertices in $\firstaclass$, where $U$ is the hypergeometric confluent function.
\end{maintheorem}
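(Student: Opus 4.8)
The plan is to establish the structural characterization first, and then derive both the up-to-isomorphism count and the labeled count as consequences. The structural claim is the heart of the matter: I would argue that membership in $\firstaclass$ forces the graph to split into exactly two $2$-cliques with a very restricted pattern of $3$-edges between them. To do this I would unpack the parameters $(\infty, 0, 7, 8)$ into their forbidden-triangle content. Since $K_1 = 0$ and $K_2 = \infty$, the triangles $(1,k,k)$ are all permitted, so there is no constraint of that form. The binding constraints come from $C = 7$ and $C' = 8$: no triangle of odd perimeter $\geq 7$ and no triangle of even perimeter $\geq 8$. With distances in $\{1,2,3\}$ I would enumerate all $\binom{3+2}{2} = $ the finitely many unordered distance-triples $(d_1,d_2,d_3)$, compute each perimeter and parity, and cross off those that are forbidden; the surviving triples are exactly the allowed metric triangles. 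I expect the outcome to be that a $3$-edge can never be ``opposite'' a short configuration in a way that would let two $3$-edges coexist incident to the same vertex or close up a short odd cycle, which is precisely what pins down the ``at most one $3$-edge per vertex of $Q_1$'' pattern.

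Next I would translate these local triangle constraints into the global two-clique structure. The idea is to define a relation on vertices by ``distance $2$'' and show, using the surviving triangle list, that it is an equivalence relation (transitivity is where I would invoke the allowed triples), yielding the partition into $Q_1$ and $Q_2$; within each class all distances are $2$, giving two $2$-cliques, and between classes the distances are forced into $\{1,3\}$. The remaining content is the incidence restriction on $3$-edges, which I would again read directly off the forbidden even/odd perimeter conditions: a vertex in $Q_1$ with two $3$-edges into $Q_2$ would create a triangle whose perimeter violates $C$ or $C'$, so each such vertex sends at most one $3$-edge across, with all other crossing edges being $1$-edges. Bipartiteness (the ``two cliques'' being the two sides) should drop out of the same analysis once I check that no odd cycle of $1$-edges can survive.

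For the enumeration up to isomorphism, I would fix the smaller clique size $k$ with $0 \le k \le \lfloor n/2 \rfloor$ and observe that an isomorphism type is determined solely by how many of the $k$ vertices in $Q_1$ carry a $3$-edge; since each such vertex's single $3$-edge can be rerouted to any unused target in $Q_2$ by an automorphism, the type depends only on the count $j \in \{0,1,\dots,k\}$, giving $k+1$ types. Summing $k+1$ over $k$ from $0$ to $\lfloor n/2 \rfloor$ (with care about the $k = n-k$ boundary case and the parity of $n$) should collapse to the stated closed form $\tfrac{\lfloor n/2\rfloor^2}{2} + \tfrac{3\lfloor n/2\rfloor}{2} + 1$ after routine triangular-number algebra. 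For the labeled count I would choose the vertex set of the smaller clique in $\binom{n}{k}$ ways, choose which $j$ of those $k$ vertices receive a $3$-edge in $\binom{k}{j}$ ways, choose the $j$ targets in $Q_2$ in $\binom{n-k}{j}$ ways, and assign the matching of sources to targets in $j!$ ways, yielding the triple sum; the compact hypergeometric form is then just the identity $\sum_j \binom{k}{j}\binom{n-k}{j} j! = (-1)^k U(-k, n-2k+1, -1)$ for the confluent hypergeometric function, which I would cite rather than re-derive.

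\textbf{The main obstacle} I anticipate is the structural step — specifically, proving that the ``at most one $3$-edge per vertex'' condition is both necessary and sufficient, and that no other cross-edge patterns (for instance, two vertices of $Q_1$ each sending a $3$-edge to the same vertex of $Q_2$, or configurations forcing a forbidden even-perimeter cycle among three or four vertices) slip through. This requires checking not just triangles but that the forbidden-triangle list genuinely suffices to exclude all larger bad configurations in $\firstaclass$; since the class is defined by forbidden triangles alone (with $\mathcal{S} = \emptyset$), I expect a careful but finite case analysis of which $3$-edge incidence patterns avoid every forbidden triple to close the argument, and this bookkeeping is where the real work lies.
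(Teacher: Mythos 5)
Your plan follows the same route as the paper: enumerate the allowed distance-triples, show that the $2$-edges organize the space into at most two $2$-cliques covering every vertex (the paper phrases this via maximal $2$-cliques in Lemmas \ref{lemma:2scomeincliques}, \ref{lemma:atMostTwo2cliques}, and \ref{lemma:2cliquesPartition}; your ``distance $2$ is an equivalence relation'' formulation is the same argument), read off from the surviving triples that the weight-$3$ cross-edges form a partial matching, and then perform the two counts exactly as in Lemma \ref{lemma:havingtwo2cliques} and Theorems \ref{thm:unlabeledfirstcount} and \ref{thm:labeledfirstcount}. Your closing worry about ``larger bad configurations'' is moot: since $\mathcal{S} = \emptyset$, membership in $\firstaclass$ is by definition determined by which triangles embed, so a finite check of triples is both necessary and sufficient, and the paper's sufficiency check is exactly the one-line verification in Lemma \ref{lemma:havingtwo2cliques}.

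However, there is a genuine error in your first step, on which everything downstream depends: you have swapped $K_1$ and $K_2$. In the notation $\mathcal{A}^{\delta}_{K_1,K_2,C,C'}$, the class $\firstaclass = \mathcal{A}^3_{\infty,0,7,8}$ has $K_1 = \infty$ and $K_2 = 0$, which means \emph{no} triangle of type $(1,k,k)$ is realized for any $k$. This constraint is not redundant: the perimeter bounds alone (no odd perimeter $\geq 7$, no even perimeter $\geq 8$) do not exclude $(1,1,1)$ (perimeter $3$) or $(1,2,2)$ (perimeter $5$). Under your reading --- ``the triangles $(1,k,k)$ are all permitted, the binding constraints come from $C$ and $C'$'' --- the surviving list of triangles would be $(1,1,1)$, $(1,1,2)$, $(1,2,2)$, $(1,2,3)$, $(2,2,2)$ rather than the paper's $(1,1,2)$, $(1,2,3)$, $(2,2,2)$, and the structural analysis collapses: a triple of vertices need not contain a $2$-edge (Observation \ref{obs:musthavetwo} fails), the distance-$2$ relation is no longer transitive (two vertices at distance $2$ from a common vertex could be at distance $1$), and the class would contain $1$-cliques of arbitrary size, which are not unions of two $2$-cliques --- so the characterization you are trying to prove would be false for the class you would actually be analyzing. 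With the correct reading, the enumeration works: $K_1 = \infty$ kills $(1,1,1)$, $(1,2,2)$, $(1,3,3)$; $C = 7$ kills the remaining odd-perimeter triples $(2,2,3)$ and $(3,3,3)$; $C' = 8$ kills $(2,3,3)$; and $(1,1,3)$ fails the triangle inequality. That yields exactly the paper's list, after which the rest of your outline (matching structure of the $3$-edges, the $k+1$ isomorphism types, the triangular-number sum, and the labeled triple sum with the cited confluent hypergeometric identity) coincides with the paper's proof.
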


\begin{maintheorem}\label{thm:theMSs:secondclass}
Let $A$ be a metric space on $n \geq 3$ vertices in $\secondaclass$. Then $A$ must consist of two $2$-cliques $Q_1,Q_2$, on $k$ vertices and $n-k$ vertices respectively, for any $0 \leq k \leq n-k$, in which each of the vertices from $Q_1$ have any configuration of $1$-edges and $3$-edges into $Q_2$. For a fixed $k$, there are up to isomorphism exactly $$\frac{\displaystyle \prod_{i=1}^k n -k+i }{k!} = \frac{\Gamma(1+n)}{\Gamma(1+n-k)k!}$$ possible clique pairs.

In particular, this gives us that there are up to isomorphism $$\sum_{k=0}^{\lfloor \frac{n}{2} \rfloor} \frac{\displaystyle \prod_{i=1}^k n -k+i }{k!} = 2^n \Gamma\Big(n-\Big\lfloor \frac{n}{2} \Big\rfloor\Big) \Gamma\Big(\Big\lfloor \frac{n}{2} \Big\rfloor +2\Big)-\frac{\Gamma(n+1) \hspace{1mm} \pFq{2}{1}{1}{-n+\big\lfloor \frac{n}{2} \big\rfloor+1}{\big\lfloor \frac{n}{2} \big\rfloor +2}{-1}}{\Gamma(n-\big\lfloor \frac{n}{2} \big\rfloor) \Gamma(\big\lfloor \frac{n}{2} \big\rfloor+2)}$$ 
 metric spaces on $n$ vertices in $\secondaclass$, where $_2 F_1$ is the hypergeometric function.
 
 When counting not up to isomorphism, there are
$$\sum_{k=0}^{\lfloor \frac{n}{2} \rfloor}  \binom{n}{k} 2^{k(n-k)}$$ metric spaces on $n$ vertices in $\secondaclass$.
\end{maintheorem}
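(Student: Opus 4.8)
The plan is to separate the argument into one structural step and two counting steps, and I would run it as follows.

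\emph{Structure.} First I would extract the combinatorial meaning of the parameters $(\delta,K_1,K_2,C,C')=(3,\infty,0,7,10)$. The conditions $K_1=\infty$ and $K_2=0$ say that no metric triangle $(1,k,k)$ occurs, which forces the $1$-edges to form a bipartite graph; as the diameter is $3$, two distinct vertices on the same side of the bipartition lie at even distance $2$, so the two sides are $2$-cliques $Q_1,Q_2$ and each cross pair gets distance $1$ or $3$. Since $\mathcal{S}=\emptyset$, membership in the age is governed only by the forbidden triangles recorded by $C=7$ (no odd perimeter $\ge 7$) and $C'=10$ (no even perimeter $\ge 10$). The only triangle types that can appear are $(2,2,2)$ within a clique and $(2,1,1),(2,1,3),(2,3,3)$ across the cliques, and I would check that each obeys the triangle inequality and has admissible perimeter. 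The crucial point is that $(2,3,3)$ has even perimeter $8<10$ and is therefore permitted; this is precisely the difference from $\firstaclass$, where $C'=8$ bans it and collapses the $3$-edges to a matching. Hence every assignment of cross distances is realizable, yielding the stated clique-pair structure.

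\emph{Labeled count.} This is then immediate: the bipartition is recovered from the metric (the sides are the distance-$2$ classes), so a labeled space is determined by a choice of the smaller side $Q_1$ of size $k\le\lfloor n/2\rfloor$ together with a free $2$-colouring of the $k(n-k)$ cross pairs, giving $\binom{n}{k}2^{k(n-k)}$ spaces; summing over $k$ gives the claimed total. I would flag the $k=n/2$ term, where the two sides are interchangeable, as the one place requiring care.

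\emph{Isomorphism count, and the main obstacle.} For fixed $k$ an isomorphism of clique pairs is a bipartition-respecting bijection, i.e.\ an element of $\mathrm{Sym}(Q_1)\times\mathrm{Sym}(Q_2)$ (plus the side-swap when $k=n-k$). Relabelling $Q_2$, each vertex of $Q_1$ is recorded by its number of $3$-edges, a valence in $\{0,1,\dots,n-k\}$, and relabelling $Q_1$ reduces the datum to the multiset of these $k$ valences; the number of size-$k$ multisets from $n-k+1$ values is $\left(\!\binom{n-k+1}{k}\!\right)=\frac{\prod_{i=1}^{k}(n-k+i)}{k!}$, as stated. Summing over $k$ and rewriting the partial binomial sum as a ${}_2F_1$ is then a routine hypergeometric/generating-function manipulation of the kind a computer algebra system performs. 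The main obstacle --- and the genuine content of the theorem --- is proving that the valence multiset is a \emph{complete} isomorphism invariant, i.e.\ that any two cross-patterns with the same $Q_1$-valences are matched by a single permutation of $Q_2$. I would approach this by exhibiting a canonical nested normal form for each pattern and showing it is reached within $\mathrm{Sym}(Q_1)\times\mathrm{Sym}(Q_2)$; this is the delicate step, since one-sided valence data carries strictly less information than a full bipartite pattern, and it is here that the argument must be made to bite.
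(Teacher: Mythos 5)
Your structural step and your labeled count follow the paper's own route (Lemma \ref{lemma:oneOrTwo2cliques:second} and Theorem \ref{thm:labeledsecondcount}), and your flag on the $k=n/2$ term is well placed: when the two parts have equal size, the choice of which part plays the role of $Q_1$ makes $\binom{n}{k}2^{k(n-k)}$ count each labeled space twice, a point the paper's formula also passes over.

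The isomorphism count, however, breaks at precisely the step you single out as the main obstacle, and it breaks because the claim you defer is false: the $Q_1$-valence multiset is \emph{not} a complete isomorphism invariant, so no canonical-form argument can establish it. Concretely, take $k=2$, $n-k=3$, $Q_1=\{a,b\}$, $Q_2=\{c,d,e\}$. In pattern A, both $a$ and $b$ place their single $3$-edge on the same vertex $c$; in pattern B, $a$ places its $3$-edge on $c$ and $b$ places its $3$-edge on $d$. Both patterns are admissible in $\secondaclass$ --- pattern A contains the triangle $(2,3,3)$, which is exactly the triangle this class permits and $\firstaclass$ forbids --- and both have valence multiset $\{1,1\}$, yet they are not isomorphic: in A the vertex $c$ meets two $3$-edges, while in B every vertex of $Q_2$ meets at most one, and any isometry preserves the number of $3$-edges at a vertex. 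Hence for fixed $k$ one must count bipartite $3$-edge patterns up to $\mathrm{Sym}(Q_1)\times\mathrm{Sym}(Q_2)$, which is a Burnside/P\'olya computation, not a count of valence multisets; for $(k,n-k)=(2,3)$ it yields $13$ classes rather than the claimed $\binom{5}{2}=10$. You should know that this is not a defect of your proposal alone: the paper's own proof of Theorem \ref{thm:unlabeledsecondcount} makes the same unjustified assertion (that nondecreasing valence tuples ``produce spaces which are unique up to isomorphism''), and its Figure \ref{secondclass_fiveVs} shows sixteen classes on five vertices where there are in fact $1+5+13=19$. Your instinct that one-sided valence data carries strictly less information than the full bipartite pattern was exactly right --- but it is fatal to the stated per-$k$ formula and to the ${}_2F_1$ total built from it, not merely a delicate point to be patched.
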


We note that when we say $A$ must consist of two $2$-cliques, we allow for the possibilities of one of the $2$-cliques being empty or consisting of a single vertex.

After identifying these spaces, we use these classifications to prove that both $\firstaclass$ and $\secondaclass$ satisfy 0--1 laws, and in doing so we identify their almost sure theories. Finally, we compare these almost sure theories to the corresponding generic theories, and we establish that the almost sure theory and the generic theory match for $\firstaclass$ but not for $\secondaclass$. 

Since these classes consist of metric spaces with distances $1,2,$ or $3$, the language we use is finite binary relational, with a relation for each possible distance: $\{R_1,R_2,R_3\}$.

\begin{maintheorem}\label{thm:firstASnGen}
The space $\mathcal{A} = \firstaclass$ satisfies both an unlabeled and a labeled 0--1 law: any first-order sentence $\phi$ in  $\mathcal{L} = \{R_1,R_2,R_3\}$ will satisfy $$ \lim_{n \to \infty} \textgoth{m}_{\mathcal{A},\phi}(n) \in \{0,1\}$$ where $n$ is the number of vertices of the spaces in $\firstaclass$, and $\textgoth{m}_{\mathcal{A},\phi}(n)$ is the proportion of $n$-vertex spaces in $\mathcal{A}=\firstaclass$ which satisfy a fixed $\phi $. This convergence happens whether we count up to isomorphism or not.

Moreover, the almost sure theory of $\firstaclass$ diverges from the generic theory of $\firstGamma$.
\end{maintheorem}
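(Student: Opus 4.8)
The plan is to read the almost sure structure straight off the classification in Theorem~\ref{thm:theMSs:firstclass}, prove the 0--1 law by an Ehrenfeucht--Fra\"{i}ss\'{e} argument against that structure, and then isolate a single sentence separating the almost sure theory from the generic one. The key observation is that every space in $\firstaclass$ is determined up to isomorphism by three nonnegative integers: the number $p$ of $3$-edges (these form a partial matching between $Q_1$ and $Q_2$, since two vertices of a common part sharing a $3$-neighbor would form a forbidden $(2,3,3)$ triangle), and the numbers $u_1,u_2$ of vertices in $Q_1,Q_2$ incident to no $3$-edge. Indeed $|Q_1|=p+u_1$ and $|Q_2|=p+u_2$; the relation ``$R_2(x,y)$ or $x=y$'' is an equivalence relation whose two classes are $Q_1,Q_2$; $R_3$ is the matching; and $R_1$ is every remaining cross-class pair. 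So the isomorphism type is exactly the unordered datum $\big(p,\{u_1,u_2\}\big)$.

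First I would pin down the typical values of $p,u_1,u_2$. By the labeled enumeration of Theorem~\ref{thm:theMSs:firstclass}, the number of spaces with a fixed bipartition into parts of sizes $a\le b$ is the number $\sum_j\binom{a}{j}\binom{b}{j}j!$ of partial matchings. For $a=b=m$ the ratio of successive terms $\binom{m}{j}^2 j!$ is $(m-j)^2/(j+1)$, which exceeds $1$ exactly while $m-j\gtrsim\sqrt m$; hence the count is dominated by matchings of size $j^\ast\approx m-\sqrt m$, and the balanced choice $k=\lfloor n/2\rfloor$, together with the factor $\binom{n}{k}$, dominates the sum over $k$. Consequently, for each fixed threshold $T$ a super-exponentially small fraction of $n$-vertex spaces violate any of $p\ge T$, $u_1\ge T$, $u_2\ge T$: spaces with $p<T$ are swamped by the $\binom{n}{\lfloor n/2\rfloor}(n/2)!\,e^{\Theta(\sqrt n)}$ near-perfectly matched balanced spaces, spaces with $u_i<T$ are the near-perfect matchings and form an $e^{-\Theta(\sqrt n)}$ fraction, and spaces with a tiny part are negligible against the balanced ones. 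In the unlabeled count the isomorphism types form a grid of size $\Theta(n^2)$ in the parameters $(k,p)$, of which only an $O(T/n)$ fraction violate any of the three bounds, so the same conclusion holds in both regimes:
\[
\lim_{n\to\infty}\ \frac{\#\{A : p(A)\ge T,\ u_1(A)\ge T,\ u_2(A)\ge T\}}{\#\{A\}}\;=\;1 .
\]
I expect verifying this domination in both counting regimes to be the main technical obstacle, though the explicit formulas of Theorem~\ref{thm:theMSs:firstclass} make the estimates tractable.

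Next I would convert this into the 0--1 law. Since an isomorphism type is simply a two-class equivalence relation carrying a partial matching, a rank-$r$ Ehrenfeucht--Fra\"{i}ss\'{e} game cannot distinguish two such spaces once $p,u_1,u_2$ agree after truncation at $2^r$: Duplicator answers within matched pairs, within the unmatched vertices of the correct class, and respecting the class partition, and never runs out so long as each relevant count is at least $2^r$. Hence any two spaces with $p,u_1,u_2\ge 2^r$ satisfy the same $\mathcal L$-sentences of quantifier rank $\le r$. Given $\phi$ of rank $r$, taking $T=2^r$ above shows that almost all $n$-vertex spaces lie in one rank-$r$ equivalence class, so $\textgoth{m}_{\mathcal{A},\phi}(n)\to 0$ or $\to 1$, whether we count up to isomorphism or not; this gives both forms of the law.

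Finally I would exhibit the divergence through the sentence
\[
\sigma:\quad \forall x\,\exists y\, R_3(x,y),
\]
asserting that every vertex is matched. The two-point space at distance $3$ is a legitimate member of $\firstaclass$ (it is the clique pair with $k=1$ and one $3$-edge), so the extension property of the \Frse limit forces every vertex of $\firstGamma$ to have a distance-$3$ partner; that is, $\firstGamma$ is a \emph{perfect} matching of its two infinite parts, and $\firstGamma\models\sigma$. On the other hand $\neg\sigma$ is equivalent to the existence of an unmatched vertex, which holds whenever $u_1\ge 1$ or $u_2\ge 1$; by the concentration above (with $T=1$) this is almost sure, since a space satisfies $\sigma$ only if it is an exact perfect matching, a set of density $e^{-\Theta(\sqrt n)}\to 0$. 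Thus $\neg\sigma$ lies in the almost sure theory of $\firstaclass$ while $\sigma$ lies in the generic theory of $\firstGamma$, so the two theories diverge.
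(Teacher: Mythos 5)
Your proposal is correct, and while it rests on the same foundations as the paper's own proof---the classification from Theorem \ref{thm:theMSs:firstclass}, control of the same three quantities, and essentially the same separating sentence $\forall x\,\exists y\, R_3(x,y)$---it reaches the 0--1 law by a genuinely different mechanism. The paper's axiom schema $\Phi$ encodes exactly your parameter bounds (its type \ref{item:threePairs} sentences assert $p\ge T$, its type \ref{item:verticesWithOnlyOnesOver} sentences assert $u_i\ge T$, and types \ref{item:possibleDistances}--\ref{item:exactlyTwoParts} encode the bipartition-with-partial-matching shape), but the paper then passes to the infinite: it constructs the countable structure $\Gamma_{as}$ of Definition \ref{defn:firstAS}, proves in Lemma \ref{obs:AS1unique} that it is the unique countable model of $\Phi$, concludes that $\Phi$ axiomatizes a complete theory, and transfers completeness plus almost-sure validity of the axioms back to finite densities. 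You instead stay entirely finite: the Ehrenfeucht--Fra\"{i}ss\'{e} game shows that any two finite spaces whose parameters all exceed $2^r$ are rank-$r$ equivalent, so parameter concentration alone forces every density to $0$ or $1$, in both counting regimes at once. Your route is more elementary---no limit structure, no $\aleph_0$-categoricity or completeness step---whereas the paper's route costs more machinery but delivers an explicit complete axiomatization of the almost sure theory together with its countable model $\Gamma_{as}$ (which the paper notes is not even vertex-transitive), making the comparison with the generic theory of $\firstGamma$ immediate. Two small remarks: your bound $2^r$ is generous, since the invariants you list (respect the bipartition, matched-to-matched, partner-to-partner) already let Duplicator survive $r$ rounds once all parameters are at least $r$; and you correctly flag the labeled concentration as the real work---negligibility of $\{p<T\}$ follows from crude bounds (polynomially many bad labeled spaces against a $\lfloor n/2\rfloor!$ lower bound on the total), but negligibility of $\{u_i<T\}$ genuinely needs the $e^{\Theta(\sqrt{n})}$ gap between the full matching count and its near-perfect-matching terms that your successive-ratio computation supplies, a fact the paper also needs in Lemma \ref{lemma:LabeledFirstPhi01} and asserts with less justification than your sketch provides.
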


\begin{maintheorem}\label{thm:secondASnGen}
The space $\mathcal{A} = \secondaclass$ satisfies both an unlabeled and a labeled 0--1 law: any first-order sentence $\phi$ in $\mathcal{L} = \{R_1,R_2,R_3\}$ will satisfy $$ \lim_{n \to \infty} \textgoth{m}_{\mathcal{A},\phi}(n) \in \{0,1\}$$ where $n$ is the number of vertices of the spaces in $\secondaclass$, and $\textgoth{m}_{\mathcal{A},\phi}(n)$ is the proportion of $n$-vertex spaces in $\mathcal{A}=\secondaclass$ which satisfy a fixed $\phi$. This convergence happens whether we count up to isomorphism or not.

Moreover, the almost sure theory of $\secondaclass$ agrees with the generic theory of $\secondGamma$.
\end{maintheorem}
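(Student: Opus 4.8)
The plan is to identify the generic theory of $\secondGamma$ with its set of one-point extension axioms and then show that each such axiom holds asymptotically almost surely in $\secondaclass$. Since $\secondGamma$ is a homogeneous structure in the finite relational language $\mathcal{L} = \{R_1,R_2,R_3\}$, its theory $T_{\mathrm{gen}}$ is $\aleph_0$-categorical, hence complete, and is axiomatized (over the axiom asserting infinitely many vertices) by the extension axioms: for each pair $A \subseteq B$ in the age of $\secondGamma$ with $|B| = |A| + 1$, the sentence asserting that every copy of $A$ extends to a copy of $B$. Granting that every extension axiom has limiting measure $1$, the conclusion follows by the standard argument: if $\phi \in T_{\mathrm{gen}}$ then by compactness $\phi$ is implied by finitely many extension axioms and so has limit $1$, while if $\phi \notin T_{\mathrm{gen}}$ then $\neg\phi \in T_{\mathrm{gen}}$ (by completeness) has limit $1$, so $\phi$ has limit $0$; in either case we obtain a $0$--$1$ law and the almost sure theory equals $T_{\mathrm{gen}}$.

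By Theorem~\ref{thm:theMSs:secondclass} the structures in $\secondaclass$ are exactly the two-clique spaces with an arbitrary pattern of $1$- and $3$-edges between the cliques, and the ``distance $2$'' relation canonically recovers the (unordered) bipartition into $Q_1,Q_2$. Consequently the realizable one-point extension types over a configuration $A$, split by the bipartition as $A = A_1 \sqcup A_2$, are precisely those in which the new vertex $v$ enters one clique --- say the clique of $A_1$, so that $d(v,a) = 2$ for all $a \in A_1$ automatically --- and realizes an arbitrary function $f \colon A_2 \to \{1,3\}$ of cross-distances; the case where $v$ joins the clique of $A_2$ is symmetric. To analyze the uniform measure on the labeled spaces, I would first record that the labeled count $\sum_{k} \binom{n}{k} 2^{k(n-k)}$ is governed by the factor $2^{k(n-k)}$, which forces the part sizes to concentrate near $n/2$: writing $k = n/2 - t$ the weight decays like $2^{-t^2}$, so with probability tending to $1$ both cliques have size $(1/2 - o(1))n$. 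Conditioned on the bipartition, every cross-edge is independently $1$ or $3$ with probability $1/2$.

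With this setup the core estimate is routine. Fix a copy of $A$ and a target type $f$. A candidate vertex $w$ in the clique of $A_1$ lies at distance $2$ from all of $A_1$ automatically, and its $|A_2|$ cross-edges to $A_2$ are independent, so $w$ realizes $f$ with probability $2^{-|A_2|}$; since distinct candidates have independent cross-patterns, the probability that no candidate works is at most $(1 - 2^{-|A_2|})^{|Q_1| - |A_1|}$, which on the event $|Q_1| \geq (1/2 - o(1))n$ decays exponentially in $n$. A union bound over the at most $n^{|A|}$ copies of $A$ (and the finitely many target types) leaves the failure probability tending to $0$, establishing the extension axiom in the labeled count. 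To pass to the unlabeled count I would show that almost every space in $\secondaclass$ is rigid: two vertices in a common clique are interchangeable only if they have identical cross-patterns, an event of probability $2^{-\Theta(n)}$ per pair, and a clique-swapping automorphism is likewise exponentially unlikely; hence the rigid spaces carry almost all of both the labeled and the unlabeled measure and the two $0$--$1$ laws coincide.

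The main obstacle is the measure-theoretic bookkeeping rather than any single hard inequality: one must control the non-uniform distribution of the bipartition size induced by the weights $2^{k(n-k)}$ and confirm that both cliques remain of linear size, since the extension argument breaks down if a clique can be small. It is exactly here that the second class differs from the first: in $\secondaclass$ the cross-edges are completely unconstrained, so a typical space looks like a balanced random bipartite graph and satisfies all extension axioms, matching $\secondGamma$; whereas in $\firstaclass$ each vertex sends at most one $3$-edge across, the $3$-edges become asymptotically negligible, and the corresponding extension axioms fail --- which is precisely why the almost sure and generic theories diverge there (Theorem~\ref{thm:firstASnGen}) but agree here.
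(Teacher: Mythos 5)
Your proposal is correct in substance, but it reverses the architecture of the paper's proof, and the comparison is instructive. The paper first proves the \emph{unlabeled} 0--1 law purely combinatorially: it axiomatizes the almost sure theory by a bespoke sentence set $\Phi$ (all distances in $\{1,2,3\}$, the two-part triple condition, and for each $p$ the assertion that every vertex has at least $p$ many $1$-edges and at least $p$ many $3$-edges), verifies that each axiom has unlabeled measure tending to $1$ by direct counting of isomorphism classes, and shows $\Phi$ is $\aleph_0$-categorical by a back-and-forth argument; the \emph{labeled} law is then deduced from the unlabeled one via a lemma that almost all spaces in $\secondaclass$ are asymmetric, and the agreement with the generic theory is the closing observation that $\secondGamma \models \Phi$. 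You instead prove the \emph{labeled} law first by the classical Fagin route---one-point extension axioms of $\mathrm{Th}(\secondGamma)$, concentration of the part sizes near $n/2$ under the weights $\binom{n}{k}2^{k(n-k)}$, i.i.d.\ cross-edges conditioned on the bipartition, and a union bound---and then transfer to the unlabeled law via rigidity. Your route buys a more transparent explanation of \emph{why} the almost sure and generic theories coincide here: the generic theory's own extension axioms are verified directly, so no separate axiom set, explicit limit structure, or categoricity argument is needed. The paper's route buys explicit axioms and an explicit countable model of the almost sure theory, and its unlabeled law rests on exact enumeration rather than on any probabilistic estimate.

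Two caveats should be repaired in a full write-up. First, $\mathrm{Th}(\secondGamma)$ is axiomatized by the extension axioms \emph{together with} the universal axioms excluding the forbidden triangles; extension axioms alone do not exclude forbidden configurations. This is harmless---every space in $\secondaclass$ satisfies the universal axioms outright, so they have measure exactly $1$---but it belongs in the compactness step. Second, your rigidity step is understated: checking that same-clique pairs are interchangeable only with probability $2^{-\Theta(n)}$ and that clique-swaps are unlikely handles only transpositions and one extreme case, whereas both the labeled statement and the transfer between the labeled and unlabeled measures require bounding $\sum_{\sigma \neq \mathrm{id}} \Pr[\sigma \in \mathrm{Aut}]$ over \emph{all} nontrivial permutations, so that by the Burnside-type count non-rigid structures are negligible in both measures simultaneously. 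That needs the orbit-counting estimate, with failure probability exponentially small in $n$ times the number of displaced vertices, which is exactly the content of the paper's asymmetry lemma (adapted from the proof that almost all graphs are asymmetric). Both repairs are standard, so I regard your plan as a sound alternative proof rather than a gapped one.
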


\section{\texorpdfstring{The Explicit Identification and Enumerations of $\firstaclass$ and of $\secondaclass$}{expId}}\label{sec:expId}

The driving difference between $\firstaclass$ and $\secondaclass$ is that $\secondaclass$ allows the metric triangle $(2,3,3)$. Thus our process for identifying these two classes will be quite similar, though we will ultimately have the possibility for more $3$-edges and thus more spaces in $\secondaclass$.

\subsection{\texorpdfstring{The explicit identification of $\firstaclass$ and its enumeration up to isomorphism}{expId2}}\label{sec:firstAclass}

We begin by observing the following.
\begin{obs}
The only triangles in $\firstaclass$ are
\begin{itemize}
\item $(1,1,2)$;
\item $(1,2,3)$;
\item $(2,2,2)$.
\end{itemize}
\end{obs}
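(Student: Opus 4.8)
The plan is to determine which metric triangles can occur in a space belonging to $\firstaclass = \mathcal{A}^3_{\infty,0,7,8}$ by decoding the five parameters $(\delta, K_1, K_2, C, C')$ and applying the definitions given in the introduction. I would first translate the subscripts: $\delta = 3$, so distances lie in $\{1,2,3\}$; $K_1 = \infty$ means no triangle of the form $(1,k,k)$ is realized (since $K_1$ is the \emph{smallest} such $k$, having it be $\infty$ forbids all of them); $K_2 = 0$ is consistent with this emptiness; and $C = 7$, $C' = 8$ record the perimeter bounds. Since $C = \min(C_0, C_1)$ and $C' = \max(C_0, C_1)$, and $7$ is odd while $8$ is even, I read off $C_1 = 7$ and $C_0 = 8$: there are no triangles of odd perimeter $\geq 7$ and none of even perimeter $\geq 8$.

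Next I would enumerate all candidate triples $(d_1, d_2, d_3)$ with each $d_i \in \{1,2,3\}$ and $d_1 \leq d_2 \leq d_3$, and filter them against three constraints simultaneously: the metric (triangle) inequality $d_3 \leq d_1 + d_2$; the perimeter bounds just derived; and the condition $K_1 = \infty$ forbidding every isosceles triangle $(1,k,k)$. Concretely, the triangle inequality immediately kills $(1,1,3)$ and $(1,3,3)$ fails the $K_1$ test anyway (it is of the form $(1,k,k)$ with $k=3$) while also being even-perimeter-$7$\,—\,odd, perimeter $7 \geq 7$, so excluded by $C_1$; I would check each surviving triple against all three filters. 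The triples $(2,2,3)$ and $(3,3,3)$ have perimeters $7$ and $9$ respectively, both odd and $\geq 7$, so $C_1 = 7$ excludes them; $(2,3,3)$ has even perimeter $8 \geq 8$, so $C_0 = 8$ excludes it (this is precisely the triangle whose presence distinguishes $\secondaclass$, as the surrounding text emphasizes); and $(1,2,2)$, $(1,1,1)$, $(2,2,2)$-type isosceles $(1,k,k)$ cases are removed by $K_1 = \infty$. What remains after applying all filters is exactly $(1,1,2)$, $(1,2,3)$, and $(2,2,2)$.

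The bulk of the argument is thus a finite case check, so the main obstacle is not difficulty but bookkeeping: I must be careful to apply the $K_1 = \infty$ constraint (which forbids $(1,1,1)$, $(1,2,2)$, and $(1,3,3)$) correctly, since these isosceles-with-a-$1$ triangles are ruled out by a different mechanism than the perimeter bounds, and it is easy to conflate the two. I would organize the verification as a short table listing each of the finitely many triples together with the reason it survives or is eliminated, making explicit for each forbidden triple which of the three constraints (triangle inequality, $K_1$, or the perimeter bound $C$ or $C'$) is responsible. Confirming that the three listed triangles each pass \emph{all} constraints\,—\,$(1,1,2)$ has perimeter $4$ and is not of the form $(1,k,k)$; $(1,2,3)$ has odd perimeter $6 < 7$ and is not isosceles-with-a-$1$; $(2,2,2)$ has even perimeter $6 < 8$ and contains no $1$-edge\,—\,completes the observation.
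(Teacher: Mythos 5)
Your proposal is correct and takes essentially the same route as the paper's proof: the paper's one-sentence argument (all odd-perimeter triangles and all triangles of perimeter more than $6$ are excluded, and $(1,1,3)$ violates the triangle inequality) is precisely your parameter decoding ($K_1=\infty$ kills the $(1,k,k)$ triples, $C_1=7$ and $C_0=8$ give the perimeter bounds) followed by the same finite case check, with your version merely making explicit which constraint eliminates which triple. Two cosmetic slips you should fix, neither of which affects the conclusion: the mid-proof phrase lumping ``$(2,2,2)$-type'' in with the $(1,k,k)$ cases removed by $K_1=\infty$ is wrong ($(2,2,2)$ is not of that form and survives, as your final verification correctly states), and the perimeter $6$ of $(1,2,3)$ is even, not odd (it lies below both bounds, so it passes either way).
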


\begin{proof}
This follows immediately from the fact that we have excluded all triangles of odd perimeter and triangles of perimeter more than $6$. Note that $(1,1,3)$ is not a triangle, as it violates the triangle inequality.
\end{proof}

We thus have the following.

\begin{obs}\label{obs:musthavetwo}
Any collection of three vertices must have at least one pairwise distance of $2$.
\end{obs}


Moreover, we also get the following.

\begin{lemma}\label{lemma:2scomeincliques}
Any $2$-edge is part of a $2$-clique.
\end{lemma}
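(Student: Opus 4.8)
The plan is to recast the lemma as the statement that the distance-$2$ relation is transitive, from which the clique structure follows immediately. Recall that a $2$-clique is simply a set of vertices that are pairwise at distance $2$, so a single $2$-edge already counts as a $2$-clique on two vertices. The substantive content is therefore that the maximal $2$-cliques partition the vertex set: if $u,v$ and $v,w$ are each joined by a $2$-edge, then $u,w$ must also be joined by a $2$-edge, so there is no way for a $2$-edge to ``escape'' the clique it sits in. I would prove exactly this transitivity and then read off the lemma.

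The key step is a short case analysis driven by the earlier observation that the only triangles occurring in $\firstaclass$ are $(1,1,2)$, $(1,2,3)$, and $(2,2,2)$. Suppose $d(u,v)=2$, and let $w$ be any vertex with $d(u,w)=2$. Then the metric triangle on $\{u,v,w\}$ already has two sides of length $2$, namely $uv$ and $uw$. Scanning the admissible list, the triangle $(1,2,2)$ (odd perimeter $5$) and the triangle $(2,2,3)$ (odd perimeter $7$) are both excluded, so the only admissible triangle possessing two sides of length $2$ is $(2,2,2)$. Hence the third side is forced: $d(v,w)=2$. By symmetry the same argument applies starting from a $2$-edge at $v$, so the relation ``distance exactly $2$'' is transitive (and symmetric, with the diagonal adjoined it is reflexive as well).

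With transitivity in hand, the connected components of the graph whose edges are the $2$-edges are themselves complete, i.e. each such component is a $2$-clique; consequently every $2$-edge lies in a unique maximal $2$-clique, and these maximal $2$-cliques partition the vertices. This is precisely the assertion of the lemma. I do not expect a genuine obstacle here: the entire argument rests on the exhaustiveness of the three-element triangle list, which is already established, and the only thing to be careful about is confirming that $(2,2,2)$ is truly the sole admissible triangle containing two $2$-edges (the other two candidates $(1,2,2)$ and $(2,2,3)$ are ruled out by the odd-perimeter exclusion). I would flag that this transitivity is the structural engine for the full classification in Theorem~\ref{thm:theMSs:firstclass}, since it immediately begins to force the vertex set into a union of $2$-cliques.
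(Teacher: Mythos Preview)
Your proposal is correct and takes essentially the same approach as the paper: both arguments rest on the single observation that two adjacent $2$-edges can only complete to the triangle $(2,2,2)$, since $(1,2,2)$ and $(2,2,3)$ have odd perimeter. The paper phrases this as ``adjacent $2$-edges force $(2,2,2)$'' and then appeals to connectedness, while you phrase it as transitivity of the distance-$2$ relation; these are the same idea in different clothing, and your partition remark slightly anticipates the content of Lemmas~\ref{lemma:atMostTwo2cliques} and~\ref{lemma:2cliquesPartition} rather than being strictly needed for the present lemma.
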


\begin{proof}
An isolated $2$-edge constitutes a $2$-clique, namely a $2$-clique of $2$ vertices.

If there are two adjacent $2$-edges, then the only triangle they can be a part of is $(2,2,2)$. Any larger collection of $k$ connected $2$-edges will have its adjacent pairs all be in the metric triangle $(2,2,2)$; thus this collection must be part of a $2$-clique on $k+1$ vertices.
\end{proof}

We will use now the following terminology.

\begin{defn}
A $2$-clique is \emph{maximal} if there is no strictly larger $2$-clique containing it.
\end{defn}

\begin{lemma}\label{lemma:atMostTwo2cliques}
A space $A \in \firstaclass$ can have at most two maximal $2$-cliques.
\end{lemma}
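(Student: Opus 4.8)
The plan is to show that if $A \in \firstaclass$ had three distinct maximal $2$-cliques, we could find a forbidden triangle. First I would set up the key structural fact that drives everything: by Observation \ref{obs:musthavetwo}, every triple of vertices contains a $2$-edge, so the graph whose edges are the $1$-edges and $3$-edges of $A$ is triangle-free in the ordinary sense, and more importantly, the $2$-edges are very constrained. Combined with Lemma \ref{lemma:2scomeincliques}, every $2$-edge lives inside a $2$-clique, so the maximal $2$-cliques partition or at least cover the vertices carrying $2$-edges. The heart of the argument is to understand how two distinct maximal $2$-cliques $Q_1, Q_2$ interact: I would first argue that they are \emph{disjoint}. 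Indeed, if $Q_1$ and $Q_2$ shared a vertex $v$ but were distinct, pick $x \in Q_1 \setminus Q_2$ and $y \in Q_2 \setminus Q_1$; then $d(v,x)=d(v,y)=2$, and by Observation \ref{obs:musthavetwo} applied to $\{v,x,y\}$ one of the three distances is $2$, forcing $d(x,y)=2$, which by Lemma \ref{lemma:2scomeincliques} and maximality would merge $Q_1$ and $Q_2$ into a single larger $2$-clique, contradicting maximality of both. So distinct maximal $2$-cliques are disjoint.

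Next I would analyze the edges \emph{between} two disjoint maximal $2$-cliques $Q_1$ and $Q_2$. Take $x \in Q_1$ and $y \in Q_2$; the distance $d(x,y)$ cannot be $2$ (else $\{x,y\}$ would be a $2$-edge forcing, via the $(2,2,2)$-triangle structure with the other vertices, a merge of the two cliques). Hence every cross-edge between $Q_1$ and $Q_2$ is a $1$-edge or a $3$-edge. This is the structural picture that will let me count: two $2$-cliques joined by $1$- and $3$-edges only.

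The crux is to rule out a \emph{third} maximal $2$-clique $Q_3$. Suppose $Q_1, Q_2, Q_3$ are three pairwise-disjoint maximal $2$-cliques, and pick representatives $x_i \in Q_i$. By the previous paragraph, each pairwise cross-distance $d(x_i,x_j)$ lies in $\{1,3\}$. But then the triangle on $\{x_1,x_2,x_3\}$ uses only distances from $\{1,3\}$, with \emph{no} $2$-edge among them, directly contradicting Observation \ref{obs:musthavetwo}. This is the clean contradiction, and I expect the main obstacle to be not this final step but the careful verification that a third clique genuinely yields three vertices with no mutual $2$-edge — in particular handling degenerate cases where a ``maximal $2$-clique'' is a single vertex carrying no $2$-edges at all, so that ``$2$-clique'' with at most one vertex is treated consistently with the paper's stated convention that a $2$-clique may be empty or a singleton.

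I would therefore finish by addressing this bookkeeping: a maximal $2$-clique in the genuine sense (at least one $2$-edge) is a set of mutually $2$-adjacent vertices, and the argument above shows there can be at most two such. The remaining vertices carry no $2$-edge at all; I would fold these into the two cliques (possibly leaving one clique a singleton or empty) so that the final statement — at most two maximal $2$-cliques — holds with the paper's convention. The only delicate point is ensuring that ``merging'' via Lemma \ref{lemma:2scomeincliques} is always legitimate, i.e.\ that a chain of $2$-edges forced by Observation \ref{obs:musthavetwo} really does produce one big $2$-clique and not merely a path of $2$-edges; but this is exactly what Lemma \ref{lemma:2scomeincliques} guarantees, so the reduction is sound.
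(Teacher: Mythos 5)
Your proof follows essentially the same route as the paper's: assume three distinct maximal $2$-cliques, show that all edges between them have weight $1$ or $3$, and derive a forbidden triangle --- your final contradiction via Observation \ref{obs:musthavetwo} is equivalent to the paper's ``odd perimeter'' contradiction, since that observation is itself derived from the exclusion of odd-perimeter triangles. One step, however, is misjustified: in your disjointness argument you claim that, for $Q_1,Q_2$ sharing a vertex $v$ with $x \in Q_1 \setminus Q_2$ and $y \in Q_2 \setminus Q_1$, Observation \ref{obs:musthavetwo} applied to $\{v,x,y\}$ ``forces $d(x,y)=2$.'' It does not: that observation only asserts that \emph{at least one} of the three distances is $2$, and this is already witnessed by $d(v,x)=d(v,y)=2$, so it constrains $d(x,y)$ not at all. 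What actually forces $d(x,y)=2$ is the triangle catalog (equivalently, the reasoning inside Lemma \ref{lemma:2scomeincliques}): the triple $\{v,x,y\}$ carries two $2$-edges, and the only allowed triangle with two distances equal to $2$ is $(2,2,2)$, since $(1,2,2)$ and $(2,2,3)$ have odd perimeter and are forbidden. With that citation repaired, your disjointness step --- and hence the whole proof --- is sound, and it in fact makes explicit a point the paper compresses into the single remark that a $2$-edge between two maximal $2$-cliques would contradict their being distinct.
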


\begin{proof}
Assume towards a contradiction that $A$ contains $3$ distinct maximal $2$-cliques, which we write as $Q_1,Q_2,Q_3$. By Lemma \ref{lemma:2scomeincliques}, there are no $2$-edges between these cliques, as they would otherwise not be distinct. Thus the edges between them are weighted either $1$ or $3$. However then any triangle $(v_1,v_2,v_3)$ comprising of vertices from $Q_1,Q_2,Q_3$ respectively must have odd perimeter. Thus no such configuration is possible.
\end{proof}

\begin{lemma}\label{lemma:2cliquesPartition}
For any space $A \in \firstaclass$, if $A$ contains two maximal $2$-cliques, then these $2$-cliques partition the vertices of $A$. That is, each vertex of $A$ will be in exactly one $2$-clique.
\end{lemma}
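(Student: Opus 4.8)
The plan is to establish the two halves of ``partition'' separately: \emph{disjointness} of the two maximal $2$-cliques, and the fact that together they \emph{cover} every vertex of $A$. Throughout, let $Q_1,Q_2$ denote two distinct (nonempty) maximal $2$-cliques of $A$, and recall the structural input already extracted in the proof of Lemma \ref{lemma:atMostTwo2cliques}: between two distinct maximal $2$-cliques there is no $2$-edge, since by Lemma \ref{lemma:2scomeincliques} such an edge would force the two cliques to merge into one.

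For disjointness I would argue by contradiction. Suppose $v \in Q_1 \cap Q_2$. Taking $u \in Q_1$ and $w \in Q_2$ with $u,w \neq v$ (the degenerate possibility that one of the cliques equals $\{v\}$ is immediately ruled out, since it would make that clique a proper subset of the other, contradicting maximality), the edges $vu$ and $vw$ are both $2$-edges, so the triangle $(v,u,w)$ has two sides of length $2$. Since the only admissible triangle with two $2$-edges is $(2,2,2)$ by the Observation listing the triangles of $\firstaclass$, we conclude $d(u,w)=2$. As $u$ and $w$ range over $Q_1$ and $Q_2$, this shows $Q_1 \cup Q_2$ is itself a $2$-clique, contradicting the maximality and distinctness of $Q_1,Q_2$. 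Hence $Q_1 \cap Q_2 = \emptyset$.

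For covering I would again argue by contradiction: suppose some vertex $v$ lies in neither $Q_1$ nor $Q_2$. The first step is to show $v$ sends no $2$-edge into either clique. Indeed, a $2$-edge from $v$ to some $u \in Q_1$ would, by the same $(2,2,2)$-propagation used above, force $v$ to be at distance $2$ from all of $Q_1$, so that $Q_1 \cup \{v\}$ is a strictly larger $2$-clique, contradicting maximality; the same applies to $Q_2$. Consequently every edge from $v$ into $Q_1 \cup Q_2$ has weight $1$ or $3$. Now choose $u_1 \in Q_1$ and $u_2 \in Q_2$; by disjointness and the assumption $v \notin Q_1 \cup Q_2$, the three vertices $v,u_1,u_2$ are pairwise distinct. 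Then $d(v,u_1),d(v,u_2) \in \{1,3\}$, while $d(u_1,u_2) \in \{1,3\}$ because no $2$-edge runs between distinct maximal cliques. Thus the triangle $(v,u_1,u_2)$ realizes no distance $2$, directly contradicting Observation \ref{obs:musthavetwo}. This forces $v \in Q_1 \cup Q_2$, completing the covering claim and hence the lemma.

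The main point requiring care---rather than any deep obstacle---is the degenerate behavior permitted by the convention that a $2$-clique may consist of a single vertex (or be empty). The covering argument needs both $Q_1$ and $Q_2$ nonempty so that $u_1$ and $u_2$ exist, which is exactly the intended reading of ``$A$ contains two maximal $2$-cliques''; and one must confirm that $v,u_1,u_2$ are pairwise distinct (guaranteed by disjointness together with $v \notin Q_1 \cup Q_2$) before invoking Observation \ref{obs:musthavetwo}. No heavier machinery is needed: the whole argument rests only on the triangle list of the Observation together with the no-$2$-edge-between-distinct-cliques fact inherited from Lemma \ref{lemma:atMostTwo2cliques}.
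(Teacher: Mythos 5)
Your proof is correct and follows essentially the same route as the paper's: both settle the covering claim by taking a triangle $(v,q_1,q_2)$ with $v$ outside both maximal cliques and contradicting Observation \ref{obs:musthavetwo}. Your write-up is in fact more complete than the paper's terse version---you explicitly prove disjointness, and you spell out (via maximality and the $(2,2,2)$-propagation) why no $2$-edge can occur in that triangle, steps the paper leaves implicit.
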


\begin{proof}
Assume that $A$ has two maximal $2$-cliques $Q_1,Q_2$. Assume moreover, towards a contradiction, that there is some vertex $v$ outside of $Q_1$ and $Q_2$. We consider a triangle $(v,q_1,q_2)$ where $q_1,q_2$ are vertices in $Q_1,Q_2$ respectively.

As noted in Observation \ref{obs:musthavetwo}, $(v,q_1,q_2)$ must contain a $2$-edge. This contradicts the assumption that $Q_1,Q_2$ are distinct $2$-cliques.

Thus if there are two distinct $2$-cliques in $A$, then the maximality and the distinctness of these cliques imply that
every vertex in $A$ must be in exactly one of the $2$-cliques.
\end{proof}

\begin{corlemma}\label{cor:oneOrTwo2cliques}
Any metric space $A \in \firstaclass$ on $n \geq 3$ vertices consists of two $2$-cliques, one on $k$ vertices and the other on $n-k$ vertices, for some $0 \leq k \leq n-k$.
\end{corlemma}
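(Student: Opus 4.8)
The plan is to assemble this corollary from the three preceding lemmas, splitting into the cases of one versus two maximal $2$-cliques. First I would note that since $n \geq 3$, Observation~\ref{obs:musthavetwo} forces a $2$-edge among any three vertices, so $A$ contains at least one $2$-edge; by Lemma~\ref{lemma:2scomeincliques} that edge lies in a $2$-clique, and hence $A$ has at least one maximal $2$-clique. Lemma~\ref{lemma:atMostTwo2cliques} bounds the number of maximal $2$-cliques above by two, so there are either exactly two or exactly one.

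If there are two maximal $2$-cliques, Lemma~\ref{lemma:2cliquesPartition} says they partition the vertex set, which is exactly a decomposition into a $k$-vertex and an $(n-k)$-vertex $2$-clique. So the only genuine work is the single-maximal-clique case, with one maximal $2$-clique $Q$, where I must show that at most one vertex lies outside $Q$ — that lone leftover vertex, or the empty set, playing the role of the degenerate second clique allowed by the statement.

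For that case I would first establish that no vertex $u \notin Q$ is $2$-adjacent to any vertex of $Q$: if $d(u,q) = 2$ for some $q \in Q$, then for every other $q' \in Q$ the triangle $(u,q,q')$ has two sides of length $2$, and the only admissible triangle with two $2$-edges is $(2,2,2)$, forcing $d(u,q') = 2$; thus $u$ would be $2$-adjacent to all of $Q$, contradicting maximality. Granting this, suppose two distinct vertices $u,v$ lie outside $Q$. Picking any $q \in Q$, Observation~\ref{obs:musthavetwo} gives a $2$-edge in $\{u,v,q\}$; since $uq$ and $vq$ are excluded, we must have $d(u,v)=2$, so by Lemma~\ref{lemma:2scomeincliques} the edge $uv$ sits in a $2$-clique disjoint from $Q$, producing a second maximal $2$-clique and contradicting our assumption. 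Hence at most one vertex is outside $Q$, completing the decomposition with $k \in \{0,1\}$.

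I expect the main (and essentially only) obstacle to be the bookkeeping around the degenerate possibilities of an empty or single-vertex ``clique,'' rather than any new idea: every nontrivial step is already supplied by the earlier lemmas, and the $(2,2,2)$-forcing used to absorb a stray $2$-adjacent vertex is the same mechanism underlying Lemma~\ref{lemma:2cliquesPartition}. Care is needed mainly in phrasing ``consists of two $2$-cliques'' so that the one-maximal-clique case, with its lone leftover vertex, is correctly read as the $k \in \{0,1\}$ boundary of the claimed decomposition.
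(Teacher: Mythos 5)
Your proposal is correct and takes essentially the same approach as the paper: the paper's proof of Corollary \ref{cor:oneOrTwo2cliques} is a one-line assembly of Observation \ref{obs:musthavetwo} and Lemmas \ref{lemma:2scomeincliques}, \ref{lemma:atMostTwo2cliques}, and \ref{lemma:2cliquesPartition}, together with the remark that $k=0,1$ is permitted because trivial (empty or singleton) $2$-cliques are allowed. The only difference is that you explicitly work out the single-maximal-clique case---the $(2,2,2)$-forcing argument showing that no outside vertex can be $2$-adjacent to the clique and hence that at most one vertex lies outside it---which the paper absorbs into the phrase ``trivial $2$-cliques'' and treats as immediate.
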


\begin{proof}
This follows immediately from Observation \ref{obs:musthavetwo}, and Lemmas \ref{lemma:2scomeincliques}, \ref{lemma:atMostTwo2cliques}, and \ref{lemma:2cliquesPartition}, and from observing once more that $k=0,1$ allows for trivial $2$-cliques.
\end{proof}

\begin{lemma}\label{lemma:havingtwo2cliques}
There are up to isomorphism exactly $k+1$ spaces with $n$ vertices in $\firstaclass$ which contain a $2$-clique $Q_1$ on $k$ vertices and a distinct $2$-clique $Q_2$ on $n-k$ vertices, for $0 \leq k \leq n-k$. Explicitly, each vertex $v$ from $Q_1$ contains either no or one $3$-edge into $Q_2$, and every other edge from $v$ into $Q_2$ is a $1$-edge.
\end{lemma}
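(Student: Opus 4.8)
The plan is to fix the two-clique decomposition of $A$ guaranteed by Corollary \ref{cor:oneOrTwo2cliques}, analyze the edges running between $Q_1$ and $Q_2$, and then count the resulting configurations up to isometry. First I would record that, by Lemma \ref{lemma:2scomeincliques}, there are no $2$-edges between two distinct maximal $2$-cliques, so every edge from a vertex of $Q_1$ to a vertex of $Q_2$ is either a $1$-edge or a $3$-edge. Thus a space with the prescribed clique sizes is completely determined by the set $M$ of cross $3$-edges, viewed as a bipartite graph on the vertex set $Q_1 \cup Q_2$.

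The first key step is to show $M$ is a matching. Suppose some $v \in Q_1$ had $3$-edges to two distinct vertices $q, q' \in Q_2$. Since $q, q'$ both lie in the $2$-clique $Q_2$ we have $d(q,q') = 2$, so $(v, q, q')$ realizes the triangle $(2,3,3)$, whose perimeter $8$ is forbidden in $\firstaclass$. Hence each vertex of $Q_1$ carries at most one cross $3$-edge, which gives the explicit description in the statement; the identical argument with the roles of $Q_1$ and $Q_2$ reversed shows each vertex of $Q_2$ also carries at most one cross $3$-edge, so $M$ is a matching between $Q_1$ and $Q_2$. Conversely, I would verify that every choice of matching $M$ yields a genuine member of $\firstaclass$, so that no configuration is spurious: a triangle internal to $Q_1$ or $Q_2$ is $(2,2,2)$, while a triangle with two vertices in one clique and one in the other has a side of weight $2$ and two cross sides, and the matching condition forbids both cross sides being $3$-edges, leaving only $(1,1,2)$ and $(1,2,3)$. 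One also checks that no $(1,1,3)$ can appear, since $c$ shares a clique with one endpoint of any cross $3$-edge $ab$ and so one of the short sides would have to be a $2$-edge. All of these are allowed triangles, so the space lies in $\firstaclass$.

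It then remains to count the matchings up to isometry. Any isometry between two such spaces must carry maximal $2$-cliques to maximal $2$-cliques, hence (after possibly swapping $Q_1$ and $Q_2$, which can occur only when $k = n-k$) restricts to bijections $Q_1 \to Q_1$ and $Q_2 \to Q_2$ carrying one matching onto the other; this is possible exactly when the two matchings have the same number of edges. Therefore the isometry type is determined precisely by $|M|$, and since $0 \le k \le n-k$ the quantity $|M|$ ranges over $0, 1, \ldots, \min(k, n-k) = k$, giving exactly $k+1$ spaces.

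The structural restriction to a matching is an immediate consequence of forbidding $(2,3,3)$, so the main thing to get right is the final up-to-isomorphism bookkeeping -- in particular, confirming that the single invariant separating the classes is the matching size and that its range is capped at $k$ rather than $n-k$, together with the harmless role of the clique-swap symmetry in the equal-size case.
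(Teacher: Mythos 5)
Your proposal is correct and takes essentially the same approach as the paper: the forbidden triangle $(2,3,3)$ (perimeter $8$) forces the cross $3$-edges to form a matching, and the isomorphism class is determined by the matching size, which ranges over $0,\ldots,\min(k,n-k)=k$. The paper's own proof is simply a terser version of this argument---it leaves the matching structure, the converse verification, and the invariance bookkeeping implicit---so your write-up just fills in details rather than following a different route.
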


\begin{proof}
The triangles in the spaces described will only be $(1,1,2)$, $(1,2,3)$, or $(2,2,2)$. As anywhere from none to all of the $k$-many vertices in $Q_1$ can have a $3$-edge into $Q_2$, we have that there are $k+1$ possible such spaces.
\end{proof}

We now have what we need to prove the first enumeration from Theorem \ref{thm:theMSs:firstclass}.

\begin{thm}\label{thm:unlabeledfirstcount}
For a fixed $n$, there are up to isomorphism $$\frac{\big\lfloor \frac{n}{2} \big\rfloor ^2}{2} + \frac{3 \big\lfloor \frac{n}{2} \big\rfloor}{2} + 1$$ metric 
spaces on $n$ vertices in $\firstaclass$.
\end{thm}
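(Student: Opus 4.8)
The plan is to assemble the count directly from the two structural results already in hand: Corollary~\ref{cor:oneOrTwo2cliques}, which says every space $A \in \firstaclass$ on $n \geq 3$ vertices is a union of two $2$-cliques of sizes $k$ and $n-k$ with $0 \le k \le n-k$, and Lemma~\ref{lemma:havingtwo2cliques}, which says that for each such split there are exactly $k+1$ spaces up to isomorphism. If the splits indexed by distinct $k \in \{0,1,\ldots,\lfloor n/2 \rfloor\}$ partition the isomorphism classes with no overlap, the answer is simply $\sum_{k=0}^{\lfloor n/2 \rfloor}(k+1)$, and the stated closed form follows from elementary arithmetic.

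The first step I would carry out is to verify that this summation genuinely enumerates each isomorphism class exactly once, which is where the real content lies, since Corollary~\ref{cor:oneOrTwo2cliques} on its own only asserts the existence of a decomposition, not its uniqueness. I would argue that the partition of $A$ into maximal $2$-cliques is canonical: the maximal $2$-cliques are the connected components of the graph of $2$-edges (these are genuine cliques by Lemma~\ref{lemma:2scomeincliques}), together with the remaining vertices as trivial singleton cliques, and Lemma~\ref{lemma:2cliquesPartition} guarantees there are exactly two of them. Hence the unordered pair of clique sizes $\{k, n-k\}$ is an isomorphism invariant, so a space counted under the index $k$ cannot also be counted under a different index $k'$, and the contributions for different $k$ are disjoint.

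The one case that deserves care, and which I expect to be the main obstacle, is $n$ even with $k = n/2$, where the two cliques have equal size and one might fear that the swap $Q_1 \leftrightarrow Q_2$ collapses some of the $k+1$ configurations onto each other, turning the Lemma's count into an overcount. Here I would note that the number of $3$-edges is a complete isomorphism invariant distinguishing the $k+1$ configurations: each configuration is a partial matching of $3$-edges between the two cliques (forced to be a matching, since a repeated endpoint would create a forbidden $(2,3,3)$ triangle), so up to permuting vertices within each clique it is determined by the number of matched pairs. Since the swap $Q_1 \leftrightarrow Q_2$ preserves the number of $3$-edges, it sends each configuration to one with the same invariant and merges nothing; thus the count $k+1$ remains correct even in the equal-size case.

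Finally I would complete the elementary computation: writing $m = \lfloor n/2 \rfloor$,
\[
\sum_{k=0}^{m}(k+1) = \sum_{j=1}^{m+1} j = \frac{(m+1)(m+2)}{2} = \frac{m^2}{2} + \frac{3m}{2} + 1,
\]
which is exactly $\frac{\lfloor n/2 \rfloor^2}{2} + \frac{3\lfloor n/2 \rfloor}{2} + 1$. A quick check at $n=3$ (giving $3$) and $n=4$ (giving $6$) confirms the formula.
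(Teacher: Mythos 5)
Your proposal is correct and takes essentially the same route as the paper: it invokes Lemma~\ref{lemma:havingtwo2cliques} to get $k+1$ isomorphism classes per split and sums over $0 \le k \le \lfloor n/2 \rfloor$ to obtain the stated closed form. The additional verifications you supply---that the canonical maximal $2$-clique decomposition makes the contributions from distinct $k$ disjoint, and that in the $k = n/2$ case the clique swap merges nothing because the number of $3$-edges is an isomorphism invariant---are points the paper leaves implicit, and they strengthen rather than alter the argument.
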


\begin{proof}
The number of metric spaces follows Lemma \ref{lemma:havingtwo2cliques}. Finally, we note that 
$$\sum_{k=0}^{\lfloor \frac{n}{2} \rfloor} k +1 = \sum_{k=1}^{\lfloor \frac{n}{2} \rfloor+1} k = \frac{(\big\lfloor \frac{n}{2} \big\rfloor + 2) (\big\lfloor \frac{n}{2} \big\rfloor +1)}{2} = \frac{\big\lfloor \frac{n}{2} \big\rfloor ^2}{2} + \frac{3 \big\lfloor \frac{n}{2} \big\rfloor}{2} + 1,$$ the first term being the sum resulting directly from Lemma \ref{lemma:havingtwo2cliques}.
\end{proof}

\subsection{\texorpdfstring{The general enumeration of $\firstaclass$}{enumbclasses}}\label{sec:enumBs}



We can directly prove the second enumeration from Theorem \ref{thm:theMSs:firstclass}.

\begin{thm}\label{thm:labeledfirstcount}
There are
\begin{gather*}
\sum_{k=0}^{\lfloor \frac{n}{2} \rfloor}{ \sum_{j=0}^k {\binom{n}{k} \binom{k}{j} \binom{n-k}{j} j! } }\\
= \sum_{k=0}^{\lfloor \frac{n}{2} \rfloor} (-1)^k \binom{n}{k}U(-k,n-2k+1,-1) 
\end{gather*}
metric spaces on $n$ vertices in $\firstaclass$, where $U$ is the hypergeometric confluent function.
\end{thm}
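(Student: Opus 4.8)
The plan is to reduce the count to a sum over partial matchings and then identify the inner sum with the confluent hypergeometric function. By Corollary~\ref{cor:oneOrTwo2cliques}, every space $A \in \firstaclass$ on $n \ge 3$ vertices is carried by a partition of its vertex set into two $2$-cliques $Q_1, Q_2$ of sizes $k$ and $n-k$ with $0 \le k \le \lfloor n/2 \rfloor$, and by Lemma~\ref{lemma:havingtwo2cliques} the only remaining freedom is the placement of the $3$-edges between $Q_1$ and $Q_2$, each vertex of $Q_1$ receiving at most one such edge. First I would sharpen this into the statement that the $3$-edges form a \emph{partial matching} between $Q_1$ and $Q_2$: since $\firstaclass$ forbids the triangle $(2,3,3)$, no two vertices of $Q_1$ can send a $3$-edge to a common $w \in Q_2$ (the triple $(u_1,u_2,w)$ would realize $(2,3,3)$), so in addition to each $Q_1$-vertex having out-degree at most $1$ in the $3$-edge graph, each $Q_2$-vertex has degree at most $1$. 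Conversely, any such partial matching produces only triangles of the form $(1,1,2)$, $(1,2,3)$, $(2,2,2)$, hence a space in $\firstaclass$. This gives a bijection between the spaces supported on a fixed labeled clique pair $(Q_1,Q_2)$ and the partial matchings between $Q_1$ and $Q_2$.

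Given this bijection I would count directly. For a fixed $k$ there are $\binom{n}{k}$ choices of the $k$-element set $Q_1$, and a partial matching using exactly $j$ of the $3$-edges is specified by choosing $j$ vertices of $Q_1$, $j$ vertices of $Q_2$, and a bijection between them, giving $\binom{k}{j}\binom{n-k}{j}\,j!$ matchings of size $j$. Summing over $0 \le j \le k$ and $0 \le k \le \lfloor n/2 \rfloor$ yields exactly the left-hand double sum $\sum_{k=0}^{\lfloor n/2 \rfloor}\sum_{j=0}^{k}\binom{n}{k}\binom{k}{j}\binom{n-k}{j}\,j!$. The one point needing care is the term $k=n/2$ for even $n$, where the two cliques are interchangeable and the distinguished $k$-set $Q_1$ is not canonically determined by the space; I would flag this and fix the bookkeeping convention under which the clique pair is recorded so that the count matches the stated expression.

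The remaining work is the closed-form identity: that the inner sum equals $(-1)^k\,U(-k,\,n-2k+1,\,-1)$. The key input is the standard evaluation of the confluent hypergeometric function of the second kind at a negative-integer first argument, $U(-k,\alpha+1,x) = (-1)^k k!\,L_k^{(\alpha)}(x)$, where $L_k^{(\alpha)}$ is the generalized Laguerre polynomial. Taking $\alpha = n-2k$ and $x=-1$ turns the claimed right-hand side into $(-1)^k\,U(-k,n-2k+1,-1) = k!\,L_k^{(n-2k)}(-1)$. Expanding via $L_k^{(\alpha)}(x) = \sum_{i=0}^{k}(-1)^i\binom{k+\alpha}{k-i}\frac{x^i}{i!}$ at $x=-1$, $\alpha=n-2k$, the two sign factors cancel and $k!\,L_k^{(n-2k)}(-1) = \sum_{i=0}^{k}\frac{k!}{i!}\binom{n-k}{k-i}$. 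Re-indexing by $j=k-i$ gives $\sum_{j=0}^{k}\frac{k!}{(k-j)!}\binom{n-k}{j}$, which is precisely $\sum_{j=0}^{k}\binom{k}{j}\binom{n-k}{j}\,j!$ after writing $\binom{k}{j}\,j! = k!/(k-j)!$. This matches the inner sum term by term.

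I expect the main obstacle to be the special-function bookkeeping of the last paragraph rather than the combinatorics: one must pin down the correct normalization of $U$ at negative-integer parameters and track both sign factors (the $(-1)^k$ from the $U$–Laguerre relation and the $(-1)^i$ from the Laguerre expansion) so that they cancel exactly against the reindexing $j=k-i$. The combinatorial half is routine once the partial-matching structure is isolated, so I would present it first and treat the hypergeometric identity as a self-contained lemma, checking it on small $k$ (e.g. $k=1$ gives $n$ on both sides, and $k=2,\ n=4$ gives $7$) as a consistency test before writing the general argument.
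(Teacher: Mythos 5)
Your combinatorial count coincides with the paper's: the paper's proof likewise fixes $k$ and $j$, takes $\binom{n}{k}$ ways to split the vertices, $\binom{k}{j}\binom{n-k}{j}$ ways to pick the matched vertices, and $j!$ ways to pair them; your explicit observation that the $3$-edges form a partial matching (each $Q_2$-vertex also has $3$-degree at most one, by the forbidden $(2,3,3)$ triangle) is only implicit in Lemma \ref{lemma:havingtwo2cliques} and its use. Where you go genuinely beyond the paper is the second displayed equality: the paper asserts the identification with $\sum_k (-1)^k\binom{n}{k}U(-k,n-2k+1,-1)$ with no argument at all, while your derivation through $U(-k,\alpha+1,x)=(-1)^k k!\,L_k^{(\alpha)}(x)$ with $\alpha=n-2k$, $x=-1$, the expansion $k!\,L_k^{(n-2k)}(-1)=\sum_{i=0}^{k}\frac{k!}{i!}\binom{n-k}{k-i}$, and the reindexing $j=k-i$ is correct (the two sign factors do cancel) and makes the theorem self-contained. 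That is a real improvement over the paper's proof.

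Your worry about $k=n/2$ is also a genuine catch, and it exposes a flaw in the paper's argument (which you otherwise reproduce) rather than in yours: for even $n$ a space with two parts of size $n/2$ determines only the unordered pair $\{Q_1,Q_2\}$, and since a partial matching is symmetric in the two sides, every such space arises from exactly two of the $\binom{n}{n/2}$ choices of the distinguished set $Q_1$. Hence the displayed formula double counts the $k=n/2$ term: for $n=4$ it yields $1+16+42=59$, whereas the number of labeled spaces in $\firstaclass$ on $4$ vertices is $1+16+21=38$ (three unordered partitions into two pairs, each carrying $1+4+2=7$ partial matchings). Be aware, though, that no bookkeeping convention can make the stated sum equal the number of labeled metric spaces; the accurate statement halves the $k=n/2$ term (equivalently, the stated sum counts pairs consisting of a space together with a choice of one part of size at most $n/2$). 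So your instinct to flag this point should be upgraded to an explicit correction of the theorem for even $n$, not absorbed into a convention.
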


\begin{proof}
Fix some $k$ such that $0 \leq k \leq n-k$. Fix a metric space in $\firstaclass$ consisting of two $2$-cliques $Q_1$,$Q_2$ on $k$ and $n-k$ vertices such that there are $j$ vertices in $Q_1$ with matching vertices in $Q_2$ at distance $3$, for some $0 \leq j \leq k$. There are
$$\binom{n}{k} \binom{k}{j}\binom{n-k}{j}{j!}$$
such spaces in $\firstaclass$ for a fixed $n$, $k$, and $j$. We see this as follows: there are $\binom{n}{k}$ ways to divvy up the $n$ vertices between $Q_1$ and $Q_2$. There are $\binom{k}{j} \binom{n-k}{j}$ ways to choose $j$ pairs in $Q_1$ and $Q_2$ which have a matched clique at distance $3$ in the opposite clique. Finally, there are $j!$ ways to arrange these pairs.

Thus for a fixed $n$, we have 
\begin{gather*}
\sum_{k=0}^{\lfloor \frac{n}{2} \rfloor}{ \sum_{j=0}^k {\binom{n}{k} \binom{k}{j} \binom{n-k}{j} j! } }\\
= \sum_{k=0}^{\lfloor \frac{n}{2} \rfloor} (-1)^k \binom{n}{k}U(-k,n-2k+1,-1) 
\end{gather*}
metric spaces in $\firstaclass$, when not counting generally, and where $U$ is the hypergeometric confluent function.

We recall for comparison that there are $$ \sum_{k=0}^{\lfloor n/2 \rfloor}{ \sum_{j=0}^k {1}}$$ isomorphism classes of spaces on $n$ vertices in $\firstaclass$.
\end{proof}

Thus we have proved our first main theorem.

\begin{proof}[Proof of Theorem \ref{thm:theMSs:firstclass}]
This follows directly from Lemma \ref{lemma:havingtwo2cliques}  and Theorems \ref{thm:unlabeledfirstcount} and \ref{thm:labeledfirstcount}.
\end{proof}

We provide here for illustration the isomorphism classes of metric spaces on $5$ vertices in $\firstaclass$.

\input{Diagram_firstclass_fiveVs.tex}

\subsection{The explicit identification of \texorpdfstring{$\secondaclass$}{second} and its enumeration up to isomorphism }\label{sec:secondAclass}
The structure of our analysis here closely follows that of $\firstaclass$.

We begin by observing the following.
\begin{obs}
The only triangles in $\secondaclass$ are
\begin{itemize}
    \item $(1,1,2)$;
    \item $(1,2,3)$;
    \item $(2,2,2)$;
    \item $(2,3,3)$.
\end{itemize}
\end{obs}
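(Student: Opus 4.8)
The plan is to mirror the proof of the analogous observation for $\firstaclass$: read the forbidden-triangle data off the parameters and then check a short finite list of candidate triples against them. First I would record what the parameters of $\secondaclass = \mathcal{A}^3_{\infty,0,7,10}$ say, i.e. $(\delta,K_1,K_2,C,C') = (3,\infty,0,7,10)$. Since $\delta = 3$, all distances lie in $\{1,2,3\}$. The value $K_1 = \infty$ (equivalently $K_2 = 0$) is the bipartite condition: no triangle of the form $(1,k,k)$ is realized. Writing $C = \min(C_0,C_1)$ and $C' = \max(C_0,C_1)$, here $C = 7$ is odd and $C' = 10$ is even, so necessarily $C_1 = 7$ and $C_0 = 10$; that is, there are no triangles of odd perimeter at least $7$, and none of even perimeter at least $10$.

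Next I would enumerate every triple $(d_1,d_2,d_3)$ with $1 \le d_1 \le d_2 \le d_3 \le 3$ that satisfies the triangle inequality $d_3 \le d_1 + d_2$. This rules out $(1,1,3)$ immediately and leaves the nine triples $(1,1,1)$, $(1,1,2)$, $(1,2,2)$, $(1,2,3)$, $(1,3,3)$, $(2,2,2)$, $(2,2,3)$, $(2,3,3)$, $(3,3,3)$. I would then strike out those forbidden by the three constraints above. The bipartite condition removes $(1,1,1)$, $(1,2,2)$, and $(1,3,3)$. The odd-perimeter cutoff $C_1 = 7$ removes $(2,2,3)$ (perimeter $7$) and $(3,3,3)$ (perimeter $9$), and redundantly $(1,3,3)$ again. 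The even-perimeter cutoff $C_0 = 10$ removes nothing, since the largest even perimeter available is $8$. What survives is exactly $(1,1,2)$, $(1,2,3)$, $(2,2,2)$, $(2,3,3)$, as claimed.

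There is no genuine obstacle here: the argument is a finite case check identical in shape to the one for $\firstaclass$. The only point requiring care---and the one real difference between the two classes---is the even cutoff $C_0$. For $\firstaclass$ we had $C' = 8$, which forbids even perimeter at least $8$ and so excludes $(2,3,3)$; here $C' = 10$ permits perimeter $8$, so $(2,3,3)$ is now allowed. This is precisely the extra triangle flagged at the start of the section, and it is what will later yield the additional $3$-edge configurations in $\secondaclass$. I would close, as in the first observation, by remarking that $(1,1,3)$ is excluded not by any parameter but by the triangle inequality itself.
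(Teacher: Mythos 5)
Your proposal is correct and matches the paper's approach: the paper states this observation without proof, treating it as immediate from the parameter definitions in exactly the way your finite case check makes explicit (and its proof of the analogous observation for $\firstaclass$ is the same argument---exclude odd perimeters, exclude perimeters above the even cutoff, and discard $(1,1,3)$ by the triangle inequality). Your identification of $C_0 = 10$ versus $C_0 = 8$ as the sole source of the extra triangle $(2,3,3)$ is precisely the point the paper flags at the start of Section 2.
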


Thus Observation \ref{obs:musthavetwo} still holds, that is, any collection of three vertices must have at least one pairwise distance of $2$. Lemmas \ref{lemma:2scomeincliques}, \ref{lemma:atMostTwo2cliques}, and \ref{lemma:2cliquesPartition} therefore also hold. Our first difference between $\firstaclass$ and $\secondaclass$ is the following.

\begin{lemma}\label{lemma:oneOrTwo2cliques:second}
Any metric space $A \in \secondaclass$ on $n\geq 3$ vertices consists of two $2$-cliques, one on $k$ vertices and one on $n-k$ vertices, for some $0 \leq k \leq n-k$, with any configuration of $1$-edges and $3$-edges between the cliques.
\end{lemma}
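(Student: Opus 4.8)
The plan is to reuse the structural skeleton from Corollary~\ref{cor:oneOrTwo2cliques} and then add a short triangle-by-triangle verification to justify the new clause that \emph{any} configuration of $1$- and $3$-edges between the two cliques can occur.

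First I would establish that $A$ decomposes into (at most) two $2$-cliques. This requires no new work: since Observation~\ref{obs:musthavetwo} and Lemmas~\ref{lemma:2scomeincliques}, \ref{lemma:atMostTwo2cliques}, and \ref{lemma:2cliquesPartition} have just been noted to hold verbatim for $\secondaclass$, the argument of Corollary~\ref{cor:oneOrTwo2cliques} applies word for word. Thus $A$ consists of two maximal $2$-cliques $Q_1,Q_2$ on $k$ and $n-k$ vertices respectively ($0 \leq k \leq n-k$), partitioning the vertex set, where $k=0,1$ is permitted and corresponds to a trivial clique. By Lemma~\ref{lemma:2scomeincliques} no $2$-edge can run between $Q_1$ and $Q_2$ (such an edge would merge the cliques, contradicting their distinctness and maximality), so every cross edge is a $1$-edge or a $3$-edge.

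It remains to show that \emph{every} assignment of $1$- and $3$-edges across the partition yields a space in $\secondaclass$, i.e.\ creates only permitted triangles. Here I would simply enumerate the possible triples. A triple contained in a single clique realizes $(2,2,2)$. A triple with two vertices $q,q'$ in one clique and one vertex $w$ in the other has $d(q,q')=2$ and $d(q,w),d(q',w)\in\{1,3\}$, so it realizes one of $(2,1,1)=(1,1,2)$, $(2,1,3)=(1,2,3)$, or $(2,3,3)$. All of these are on the list of allowed triangles for $\secondaclass$, and since membership in the class is a condition on triples only (the Henson set being empty), the space lies in $\secondaclass$ regardless of which cross edges are chosen.

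The one substantive point — and the only place the argument departs from the first class — is the case of a vertex carrying two $3$-edges into the opposite clique, which produces the triangle $(2,3,3)$. In $\firstaclass$ this triangle is forbidden, which is exactly why Lemma~\ref{lemma:havingtwo2cliques} had to cap each vertex at a single outgoing $3$-edge; in $\secondaclass$ it is allowed, so the cap is removed and the cross edges between the two cliques become completely unconstrained. I expect no real obstacle beyond checking that the triangle list is exhausted by the three cross-types above; the triangle inequality is automatic, as each realized type already satisfies it.
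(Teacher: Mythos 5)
Your proposal is correct and follows essentially the same route as the paper: both rely on Observation~\ref{obs:musthavetwo} and Lemmas~\ref{lemma:2scomeincliques}, \ref{lemma:atMostTwo2cliques}, and \ref{lemma:2cliquesPartition} (which the paper notes carry over verbatim to $\secondaclass$) for the two-clique decomposition, and then verify by enumeration that every cross-edge configuration produces only the allowed triangles $(1,1,2)$, $(1,2,3)$, $(2,2,2)$, $(2,3,3)$. Your write-up is somewhat more explicit than the paper's (separating the structural direction from the realizability direction, and highlighting that admitting $(2,3,3)$ is precisely what removes the one-$3$-edge-per-vertex cap of Lemma~\ref{lemma:havingtwo2cliques}), but the substance is identical.
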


\begin{proof}
Recall that inclusion in $\secondaclass$ is determined by the exclusion of forbidden metric triangles, or equivalently, by only allowing the triangles $(1,1,2), (1,2,3),(2,2,2)$, and $(2,3,3)$.

The only triangles within a $2$-clique will have the type $(2,2,2)$. The only triangles between $2$-cliques will be of type $(1,1,2)$, $(1,2,3)$, or $(2,3,3)$, all of which are allowed. It is not possible for any odd perimeter triangles or the triple $(1,1,3)$ to be embedded, by Observation \ref{obs:musthavetwo}.
\end{proof}

We can now prove the first enumeration from Theorem \ref{thm:theMSs:secondclass}.

\begin{thm}\label{thm:unlabeledsecondcount}
For a fixed $n,k$ there are up to isomorphism 
$$\frac{\displaystyle \prod_{i=1}^k n -k+i }{k!} = \frac{\Gamma(1+n)}{\Gamma(1+n-k)k!}$$
different possible pairs of cliques in a metric space on $n$ vertices in $\secondaclass$.

In particular, this gives us that there are up to isomorphism $$\sum_{k=0}^{\lfloor \frac{n}{2} \rfloor} \frac{\displaystyle \prod_{i=1}^k n -k+i }{k!} = 2^n \Gamma\Big(n-\Big\lfloor \frac{n}{2} \Big\rfloor\Big) \Gamma\Big(\Big\lfloor \frac{n}{2} \Big\rfloor +2\Big)-\frac{\Gamma(n+1) \hspace{1mm} \pFq{2}{1}{1}{-n+\big\lfloor \frac{n}{2} \big\rfloor+1}{\big\lfloor \frac{n}{2} \big\rfloor +2}{-1}}{\Gamma(n-\big\lfloor \frac{n}{2} \big\rfloor) \Gamma(\big\lfloor \frac{n}{2} \big\rfloor+2)}$$ 
 metric spaces on $n$ vertices in $\secondaclass$, where $_2 F_1$ is the hypergeometric function.
\end{thm}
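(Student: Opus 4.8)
The plan is to prove the two formulas separately: first the per-$k$ isomorphism count, and then the closed form for the sum over $k$.

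For a fixed $k$, I would start from Lemma~\ref{lemma:oneOrTwo2cliques:second}, which says that a space $A\in\secondaclass$ on $n$ vertices is nothing more than a choice of a $2$-clique $Q_1$ of size $k$, a $2$-clique $Q_2$ of size $n-k$, and, for each of the $k(n-k)$ cross pairs, a decision between a $1$-edge and a $3$-edge. All distances within $Q_1$ and within $Q_2$ are forced to be $2$, so when $k<n-k$ the cliques $Q_1,Q_2$ are the two distinct maximal $2$-cliques and any isometry must preserve them; hence two such spaces are isomorphic exactly when their $3$-edge patterns lie in the same orbit of $S_k\times S_{n-k}$. The invariant I would use to enumerate these orbits is the multiset $\{\,d(v):v\in Q_1\,\}$, where $d(v)$ is the number of $3$-edges from $v$ into $Q_2$; this is a size-$k$ multiset drawn from the $n-k+1$ values $\{0,1,\dots,n-k\}$, and a stars-and-bars count yields
\[
\binom{(n-k+1)+k-1}{k}=\binom{n}{k}=\frac{\prod_{i=1}^{k}(n-k+i)}{k!}=\frac{\Gamma(1+n)}{\Gamma(1+n-k)\,k!},
\]
which is the stated number.

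The hard part will be the lemma underpinning this count: that the $Q_1$-degree multiset is a \emph{complete} isomorphism invariant, i.e. that any two cross-edge patterns with the same multiset $\{d(v)\}$ are carried onto each other by permuting $Q_1$ and permuting $Q_2$. This is the step to watch, since for a general bipartite pattern the one-sided degree multiset does not determine the pattern up to such permutations, so the argument must genuinely use that only the distances $1$ and $3$ occur across the cliques. I would also separately dispose of the degenerate sizes $k=0,1$ and of the self-paired case $k=n-k$, where an isometry may additionally interchange $Q_1$ and $Q_2$. I would isolate and prove this completeness statement first, and only then invoke the stars-and-bars count above.

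For the total, I would rewrite each summand as $\binom{n}{k}$, reducing the claim to evaluating the partial binomial sum $\sum_{k=0}^{\lfloor n/2\rfloor}\binom{n}{k}$. Writing this as $2^{n}$ minus its tail and converting the tail by the standard partial-sum identity
\[
\sum_{k=m+1}^{n}\binom{n}{k}=\binom{n}{m+1}\,{}_2F_1\!\left(1,\,m+1-n;\,m+2;\,-1\right),\qquad m=\Big\lfloor\tfrac{n}{2}\Big\rfloor,
\]
together with $\binom{n}{m+1}=\dfrac{\Gamma(n+1)}{\Gamma(n-m)\,\Gamma(m+2)}$, one lands on the displayed right-hand side. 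This last part is essentially bookkeeping: the only content is the hypergeometric form of the tail, which can be re-derived termwise or confirmed by a computer algebra system, after which matching constants is routine (and in fact indicates that the leading term should be simply $2^{n}$, the two $\Gamma$-factors written beside it belonging to a common denominator against which they cancel).
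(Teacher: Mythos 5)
Your per-$k$ argument is essentially the paper's own: the paper likewise parametrizes a clique pair by the $k$-tuple of $3$-degrees of the vertices of $Q_1$, demands that the tuple be nondecreasing ``to ensure that we produce spaces which are unique up to isomorphism,'' and counts nondecreasing tuples with entries in $\{0,\dots,n-k\}$ --- exactly your stars-and-bars multiset count $\binom{n}{k}$. Your treatment of the total also matches (rewrite the summands as $\binom{n}{k}$ and evaluate the partial sum $\sum_{k\le\lfloor n/2\rfloor}\binom{n}{k}$ by the ${}_2F_1$ tail identity), and your parenthetical observation is correct: the factors $\Gamma(n-\lfloor n/2\rfloor)\Gamma(\lfloor n/2\rfloor+2)$ written beside $2^n$ only make sense as part of a common denominator.

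However, the statement you isolated as ``the step to watch'' is not merely hard --- it is false, and its failure sinks the count, both yours and the paper's. By Lemma \ref{lemma:oneOrTwo2cliques:second} the $3$-edges across the cliques form an \emph{arbitrary} bipartite pattern, and for $k<n-k$ any isometry must preserve each of the two maximal $2$-cliques; so the isomorphism classes for fixed $k$ are precisely the orbits of bipartite graphs on parts of sizes $k$ and $n-k$ under $S_k\times S_{n-k}$. This is exactly the general situation in which, as you yourself note, the one-sided degree multiset is not a complete invariant; there is no extra structure left to exploit. Concretely, take $n=5$, $k=2$, $Q_1=\{a,b\}$, $Q_2=\{c,d,e\}$: the pattern with $3$-edges $\{ac,bd\}$ (a matching) and the pattern with $3$-edges $\{ac,bc\}$ (a star at $c$) have the same $Q_1$-degree multiset $\{1,1\}$, but their $Q_2$-degree multisets are $(1,1,0)$ and $(2,0,0)$, so no isometry carries one to the other. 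A Burnside count over $S_2\times S_3$ gives $13$ orbits, not $\binom{5}{2}=10$; hence there are $1+5+13=19$ isomorphism classes of $5$-vertex spaces in $\secondaclass$, not the $16$ asserted by the theorem and depicted in Figure \ref{secondclass_fiveVs}. The correct per-$k$ count is the orbit count of bipartite graphs under $S_k\times S_{n-k}$ (together with a part swap when $k=n-k$), which can be computed by Burnside/cycle-index methods but is not $\binom{n}{k}$ in general. Your instinct to prove the completeness claim before invoking it was exactly right; carrying that out reveals that the theorem itself, and the paper's proof of it, are in error.
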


\begin{proof}
The metric spaces on $n$ vertices in $\secondaclass$ are described in Lemma \ref{lemma:oneOrTwo2cliques:second}. All that remains then is to count the spaces found.

Clearly, there is one $2$-clique on $n$ vertices.

For a fixed $k>0$, there are up to isomorphism $\sum_{i=1}^{n-k+1} i$ possible pairs of cliques $Q_1,Q_2$ on $k$ and $n-k$ vertices respectively. We can see this as follows. Each vertex  $v$ in $Q_1$ can have anywhere from $0$ to $n-k$ $3$-edges extending from it into $Q_2$. We view these options as $k$-tuples, in which each position can take values in $\{0,\cdots,n-k\}$. We require that the tuple must be in nondecreasing order, to ensure that we produce spaces which are unique up to isomorphism. There are $$\sum_{i_k=1}^{n-k+1} \sum_{i_{k-1}=1}^{i_k} \cdots \sum_{i_1 = 1}^{i_2} i$$ such spaces, where there are $k$-many sums. This can be simplified to $$\frac{\displaystyle \prod_{i=1}^k n -k+i }{k!}.$$

Summing over every possible $1 \leq k \leq \big\lfloor \frac{n}{2} \big\rfloor$ yields 
\begin{gather*}
\sum_{k=1}^{\lfloor \frac{n}{2} \rfloor} \frac{\displaystyle \prod_{i=1}^k n -k+i }{k!}\\
= \sum_{k=1}^{\lfloor \frac{n}{2} \rfloor} \frac{\Gamma(1+n)}{\Gamma(1+n-k)k!}\\
= 2^n \Gamma\Big(n-\Big\lfloor \frac{n}{2} \Big\rfloor\Big) \Gamma\Big(\Big\lfloor \frac{n}{2} \Big\rfloor +2\Big)-\frac{\Gamma(n+1) \hspace{1mm} \pFq{2}{1}{1}{-n+\big\lfloor \frac{n}{2} \big\rfloor+1}{\big\lfloor \frac{n}{2} \big\rfloor +2}{-1}}{\Gamma(n-\big\lfloor \frac{n}{2} \big\rfloor) \Gamma(\big\lfloor \frac{n}{2} \big\rfloor+2)}-1
\end{gather*}
spaces on $n$ vertices in $\secondaclass$ which contain two distinct $2$-cliques, where $_2 F_1$ is the hypergeometric function.
\end{proof}

\subsection{\texorpdfstring{The general enumeration of $\secondaclass$}{sbc}}

We can now directly prove the second enumeration from Theorem \ref{thm:theMSs:secondclass}.

\begin{thm}\label{thm:labeledsecondcount}
For a fixed $n$, there are
$$\sum_{k=0}^{\lfloor \frac{n}{2} \rfloor}  \binom{n}{k} 2^{k(n-k)}$$ metric spaces on $n$ vertices in $\secondaclass$.
\end{thm}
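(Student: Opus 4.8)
The plan is to read off the structure of the spaces from Lemma~\ref{lemma:oneOrTwo2cliques:second} and then count the labeled realizations of that structure directly. By that lemma, every space $A \in \secondaclass$ on $n \geq 3$ vertices is determined by an unordered partition of its vertex set into two $2$-cliques $Q_1, Q_2$ of sizes $k$ and $n-k$ with $0 \leq k \leq n-k$, together with an arbitrary assignment of weights in $\{1,3\}$ to the edges running between $Q_1$ and $Q_2$. Every edge internal to a clique is forced to have weight $2$, so the only free data are the choice of the bipartition and the cross-edge labeling. The enumeration therefore splits as a sum over the size $k$ of the smaller clique.

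For a fixed $k$ with $0 \leq k \leq \lfloor n/2 \rfloor$, I would count as follows. First, there are $\binom{n}{k}$ ways to select which $k$ of the $n$ labeled vertices form $Q_1$, the complement automatically being $Q_2$. Having fixed the two sides, the $k(n-k)$ edges between them may each independently be a $1$-edge or a $3$-edge, and by Lemma~\ref{lemma:oneOrTwo2cliques:second} every such choice yields a legitimate space in $\secondaclass$, since the only cross-triangles that can arise are of type $(1,1,2)$, $(1,2,3)$, or $(2,3,3)$, all of which are allowed. This gives $2^{k(n-k)}$ labelings, hence $\binom{n}{k} 2^{k(n-k)}$ labeled spaces for each $k$, and summing over $k$ produces
\[
\sum_{k=0}^{\lfloor \frac{n}{2} \rfloor} \binom{n}{k} 2^{k(n-k)}.
\]

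The step I expect to require the most care is verifying that this sum counts each labeled space with the correct multiplicity, i.e. that the map sending a pair consisting of a bipartition and a cross-labeling to its metric space is a bijection onto $\secondaclass$. Surjectivity is exactly Lemma~\ref{lemma:oneOrTwo2cliques:second}. For injectivity I would invoke Lemmas~\ref{lemma:2scomeincliques}, \ref{lemma:atMostTwo2cliques}, and \ref{lemma:2cliquesPartition}: the two maximal $2$-cliques of $A$ are intrinsic to $A$, so the partition $\{Q_1, Q_2\}$ can be recovered from the space, and the cross-labeling is then determined as well. The one genuinely delicate case is $n$ even with $k = n/2$: here the two parts have equal size, so there is no canonical way to designate one of them as $Q_1$, and a direct application of $\binom{n}{k}$ records each such bipartition under both orderings. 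Pinning down the multiplicity of this single middle term — either by adopting a tie-breaking convention on the equal halves or by accounting for the two-fold symmetry explicitly — is the crux of making the count exact, and I would handle the $k = n/2$ term separately from the terms with $k < n/2$, for which the size disparity makes the smaller clique unambiguous and injectivity immediate.
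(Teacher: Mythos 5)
Your counting scheme is the same as the paper's: choose the $k$ vertices of $Q_1$ in $\binom{n}{k}$ ways, label each of the $k(n-k)$ cross edges independently by $1$ or $3$, note that every such labeling is admissible (the cross triangles are $(1,1,2)$, $(1,2,3)$, $(2,3,3)$), and sum over $0 \leq k \leq \lfloor n/2 \rfloor$. The paper's proof consists of exactly these steps and nothing more.

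However, the worry you flag about the middle term is not a loose end in your write-up; it is a genuine error in the stated formula, which the paper's own proof overlooks. A labeled space is determined by the \emph{unordered} bipartition $\{Q_1,Q_2\}$ together with the cross-edge labeling (injectivity holds, as you note, because the maximal $2$-cliques are intrinsic to the space). When $n$ is even and $k = n/2$, the coefficient $\binom{n}{n/2}$ counts each unordered bipartition twice, once for a subset $S$ and once for its complement, while the available cross-labelings are identical in the two cases; hence every space with equal parts is counted exactly twice. Concretely, for $n=4$ the stated formula gives $1 + 32 + 96 = 129$, but there are only $\frac{1}{2}\binom{4}{2} = 3$ ways to split four labeled vertices into two unordered pairs, so the true count is $1 + 32 + 3\cdot 16 = 81$. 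The correct labeled count is
$$\sum_{k=0}^{\lceil n/2 \rceil - 1} \binom{n}{k} 2^{k(n-k)} \;+\; \frac{1}{2}\binom{n}{n/2} 2^{n^2/4} \quad (\text{the last term present only for } n \text{ even}),$$
which can be written uniformly as $\frac{1}{2}\sum_{k=0}^{n}\binom{n}{k}2^{k(n-k)}$. For odd $n$ this agrees with the stated formula, since the smaller clique is then unambiguous, exactly as you say. So carrying out the separate treatment of $k = n/2$ that you propose does not merely complete the proof---it shows the theorem as stated requires this factor-of-two correction in the middle term.
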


\begin{proof}
We again begin by fixing some $k$ such that $0 \leq k \leq n-k$. Fix a metric space in $\secondaclass$ consisting of two $2$-cliques $Q_1$, $Q_2$ on $k$ and $n-k$ vertices respectively. Each vertex from $Q_1$ can have anywhere from none to $n-k$ $3$-edges extending from it, and each vertex in $Q_1$ may choose its $3$-edges independently of the others. Thus for a fixed $n$, we have
$$\sum_{k=0}^{\lfloor \frac{n}{2} \rfloor}  \binom{n}{k} 2^{k(n-k)}$$
labeled metric spaces in $\secondaclass$.

Again, for comparison, there are 
$$\sum_{k=0}^{\lfloor \frac{n}{2} \rfloor} \frac{\Gamma(n+1)}{\Gamma(n-k+1)k!}$$ isomorphism classes of metric
spaces on $n$ vertices in $\secondaclass$.
\end{proof}

Thus we have now shown Theorem \ref{thm:theMSs:secondclass}.

\begin{proof}[Proof of Theorem \ref{thm:theMSs:secondclass}]
This follows immediately from Lemma \ref{lemma:oneOrTwo2cliques:second} and Theorems \ref{thm:unlabeledsecondcount} and \ref{thm:labeledsecondcount}.
\end{proof}

We include here for illustration the isomorphism classes of metric spaces on $5$ vertices in $\secondaclass$.

\input{Diagram_secondclass_fiveVs.tex}

\section{\texorpdfstring{Identifying $\firstGamma$ and $\secondGamma$}{idGams}}\label{sec:idGams}

\subsection{\texorpdfstring{Identifying $\firstGamma$}{idFirstGam}}
We know that $\firstGamma$ must embed all of the spaces found in Section \ref{sec:firstAclass}. The fact that it must embed two arbitrarily large $2$-cliques implies that it must contain two countable $2$-cliques. It cannot contain any vertices which are not in one of these two cliques, because if it did, then as with Lemma \ref{lemma:atMostTwo2cliques}, we would have to have a triangle of odd perimeter. Thus, we see that $\firstGamma$ must be bipartite, with the distances within each part being $2$, and distances between parts being either $1$ or $3$. Moreover, one vertex can have at most one $3$-edge extending from it. As any number of vertices can have a $3$-edge extending from it, each part must embed countably many vertices which have a single $3$-edge extending from it. Thus, contained in each part is a countable collection of vertices which have exactly one $3$-edge extending into the other part. To ensure homogeneity, every vertex will have a single $3$-edge extending from it into the other part. 

Thus it must be that $\firstGamma$ consists of two parts, where every vertex is distance $2$ from the vertices in its part, distance $3$ from exactly one vertex in the opposite part, and distance $1$ from every other vertex in the opposite part.

Without loss of generality, $\firstGamma$ resembles the figure below. These two figures are equivalent; the graph can be obtained from the metric space by erasing distances $2$ and $3$. Similarly, the metric space can be obtained by the graph by assigning the path metric. 
This graph is the bipartite complement of a matching between infinite sets.\\

\input{Diagram_firstGamma.tex}

\subsection{\texorpdfstring{Identifying $\secondGamma$}{inSecondGam}}
Again, we know that $\secondGamma$ must embed all of the spaces found in Section \ref{sec:secondAclass}. The fact that it must embed two arbitrarily large $2$-cliques implies that it must contain two countable $2$-cliques. It cannot contain any vertices which are not in one of these two cliques. Thus, we see that $\secondGamma$ must be bipartite, with the distances within each part being $2$. 

As with $\firstGamma$, the distances between the parts can either be $1$ or $3$. By our classification, we know that each part of $\secondGamma$ must embed countably many vertices which have countably many $1$-edges and coutably many $3$-edges extending into the opposite part.
To ensure homogeneity, every vertex must have countably many $1$-edges and countably many $3$-edges extending into the other part.

Thus we have identified $\secondGamma$ up to isomorphism; see Figure \ref{fig:secondGamma} below. Again, these two figures are equivalent; the graph can be obtained from the metric space by erasing distances $2$ and $3$, and the metric space can be obtained by the graph by assigning the path metric. This graph is known as the generic bipartite graph of diameter $3$.

\input{Diagram_secondGamma.tex}

\section{\texorpdfstring{Establishing 0--1 laws}{est01Take2}}\label{sec:01laws}

\subsection{\texorpdfstring{Establishing a 0--1 law for $\firstaclass$}{first01}}

We will show that sentences of the following forms sentences axiomatize the almost sure theory of $\firstaclass$. 

\begin{enumerate}
    \item\label{item:possibleDistances} $\forall u \forall v (u \neq v \implies (d(u,v) = 1 \vee d(u,v) = 2 \vee d(u,v) = 3))$ 
    
    This first sentence says that distance between any two vertices is always defined, and must be $1$, $2$, or $3$.
    \item\label{item:noAdjacentThrees} $\forall u \neg \exists v_1, v_2$ such that $$ v_1 \neq v_2 \hspace{1mm} \wedge \hspace{1mm} d(u,v_1) = 3 \hspace{1mm} \wedge \hspace{1mm} d(u,v_2) = 3 $$
    
    This sentence says that no triple of vertices will have two $3$-edges
    \item\label{item:exactlyTwoParts} $\forall u_1,u_2,u_3 \hspace{1cm}\bigwedge_{i,j} u_i \neq u_j \implies$
    \begin{multline*}
    d(u_1,u_2) = 2 \hspace{1mm} \wedge \hspace{1mm} d(u_1,u_3) = 2 \hspace{1mm} \wedge \hspace{1mm} d(u_2,u_3) = 2 \bigvee
d(u_1,u_2) = 2 \hspace{1mm} \wedge \hspace{1mm} d(u_1,u_3) \neq 2 \hspace{1mm} \wedge \hspace{1mm} d(u_2,u_3) \neq 2\\ \bigvee d(u_1,u_3) = 2 \hspace{1mm} \wedge \hspace{1mm} d(u_1,u_2) \neq 2 \hspace{1mm} \wedge \hspace{1mm} d(u_2,u_3) \neq 2 \bigvee d(u_2,u_3) = 2 \hspace{1mm} \wedge \hspace{1mm} d(u_1,u_2) \neq 2 \hspace{1mm} \wedge \hspace{1mm} d(u_1,u_3) \neq 2
    \end{multline*}
    
    This sentence says that every triple of vertices must have either three $2$-edges or exactly one $2$-edge
    \item\label{item:threePairs} $\exists u_1, \cdots, u_p,$ $\exists v_1,\cdots,v_p$ such that
    $$\bigwedge_{i \neq j} u_i \neq u_j  \bigwedge_{i \neq j} v_i \neq v_j \bigwedge_i d(u_i,v_i) = 3$$ 
    
    This sentence says that there are at least $p$ pairs of vertices on opposite parts which are distance $3$ from each other.
    \item\label{item:verticesWithOnlyOnesOver} $\exists u_1,\cdots,u_p$,$\exists v_1,\cdots,v_p$ $\forall w$ such that
    \begin{multline*}
    \bigwedge_{i \neq j} u_i \neq u_j \bigwedge_{i \neq j} v_i \neq v_j \bigwedge_{i,j} u_i \neq v_j \\ \bigwedge_i (\neg d(u_i,w) = 2 \implies d(u_i,w) = 1) \bigwedge_i (\neg d(v_i,w) = 2 \implies d(v_i,w) = 1) 
    \end{multline*}
    
    This sentence says that there are at least $p$ vertices in each part which only have $1$-edges into the opposite part.
\end{enumerate}

For sentence types \ref{item:threePairs}, \ref{item:verticesWithOnlyOnesOver} we vary $p$ over $\mathbb{N}$. Thus \textbf{\textit{we define $\Phi$ to be the collection of all such sentences for every $p \in \mathbb{N}$.}}\\

We will show that the asymptotic probability of isomorphism classes in $\firstaclass$ satisfying any one $\phi \in \Phi$ will go to $1$. We will then introduce a countable structure $\Gamma_{as}$ which satisfies $\Phi$. Finally, we show that $\Phi$ is $\aleph_0$-categorical, that is, given any countable structure $\Gamma$ in the same language where $\Gamma \models \Phi$, we have that $\Gamma \simeq \Gamma_{as}$. This gives us that $\Phi$ is complete. We deduce then that $\firstaclass$ satisfies an labeled 0--1 law. Finally, we make the observation that $\Th(\Gamma_{as}) \neq \Th(\firstGamma)$.

\begin{lemma}\label{lemma:firstclass01}
Define $\textgoth{m}_{\mathcal{A},\phi}(n)$ to be the proportion of isomorphism classes of $n$-vertex spaces in $\mathcal{A}=\firstaclass$ which satisfy a fixed $\phi \in \Phi$.

Then $$\lim_{n \to \infty} \textgoth{m}_{\mathcal{A},\phi}(n) = 1$$ for every $\phi \in \Phi$.
\end{lemma}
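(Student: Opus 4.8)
The plan is to split the five sentence types into two groups. The first group is the three universal sentences \ref{item:possibleDistances}, \ref{item:noAdjacentThrees}, and \ref{item:exactlyTwoParts}; for each of these I claim $\textgoth{m}_{\mathcal{A},\phi}(n)=1$ for \emph{every} $n\ge 3$, so the limit is immediate and there is nothing asymptotic to prove. Sentence \ref{item:possibleDistances} holds because every space in $\firstaclass$ takes distances only in $\{1,2,3\}$; sentence \ref{item:noAdjacentThrees} holds because, by Theorem \ref{thm:theMSs:firstclass} (equivalently Lemma \ref{lemma:havingtwo2cliques}), each vertex carries at most one $3$-edge; and sentence \ref{item:exactlyTwoParts} holds because, by Observation \ref{obs:musthavetwo} together with the list of admissible triangles, every triple realizes either $(2,2,2)$ or one of $(1,1,2),(1,2,3)$, i.e. has exactly three or exactly one $2$-edge.

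The real content lies in the two existential families \ref{item:threePairs} and \ref{item:verticesWithOnlyOnesOver}. The key device is the parameterization of isomorphism classes coming from Lemma \ref{lemma:havingtwo2cliques}: an isomorphism class of an $n$-vertex space in $\firstaclass$ is determined by a pair $(k,j)$, where $0\le k\le \lfloor n/2\rfloor$ is the size of the smaller $2$-clique $Q_1$ and $0\le j\le k$ is the number of $3$-edges (equivalently, the number of vertices of $Q_1$ carrying a $3$-edge). Under this dictionary I would translate each existential sentence into an elementary arithmetic condition on $(k,j)$. Sentence \ref{item:threePairs} with parameter $p$ asserts the existence of at least $p$ distance-$3$ pairs, which holds exactly when $j\ge p$. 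Sentence \ref{item:verticesWithOnlyOnesOver} with parameter $p$ asks for $p$ vertices in each part having no $3$-edge; since $Q_1$ and $Q_2$ contain $k-j$ and $(n-k)-j$ such vertices respectively, and since $k\le n-k$ makes the constraint in the larger part implied by the one in the smaller part, this holds exactly when $k-j\ge p$.

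With these translations the proof reduces to a counting estimate. By Theorem \ref{thm:unlabeledfirstcount} the total number of isomorphism classes is $T(n)=\sum_{k=0}^{\lfloor n/2\rfloor}(k+1)=\frac{1}{2}(\lfloor n/2\rfloor+1)(\lfloor n/2\rfloor+2)$, which grows like $n^2/8$. For sentence \ref{item:threePairs} the number of satisfying classes is $\sum_{k=p}^{\lfloor n/2\rfloor}(k-p+1)=\frac{1}{2}(\lfloor n/2\rfloor-p+1)(\lfloor n/2\rfloor-p+2)$, and the count for \ref{item:verticesWithOnlyOnesOver} is identical. Dividing by $T(n)$ and letting $n\to\infty$ with $p$ fixed gives a ratio of two quadratics in $\lfloor n/2\rfloor$ with equal leading coefficients, so the limit is $1$. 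Equivalently, the failing classes live in a thin band (small $j$ for \ref{item:threePairs}, resp. $j$ close to $k$ for \ref{item:verticesWithOnlyOnesOver}) of size $O(pn)$, which is negligible against the $\Theta(n^2)$ total.

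I expect the only delicate step to be the translation for sentence \ref{item:verticesWithOnlyOnesOver}: one must confirm that ``a vertex has only $1$-edges into the opposite part'' is precisely the condition of carrying no $3$-edge, and must keep track of which part is the binding one (this is where $k\le n-k$ is used). Everything after the translation is a routine asymptotic comparison of two quadratic polynomials in $\lfloor n/2\rfloor$, so the main obstacle is conceptual—pinning down the $(k,j)$-dictionary exactly—rather than computational.
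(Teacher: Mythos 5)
Your proposal is correct and follows essentially the same route as the paper: the three universal sentence types hold identically for all spaces, and for the two existential families you arrive at exactly the paper's counts, $\sum_{k=p}^{\lfloor n/2\rfloor}(k-p+1)$ satisfying classes out of $\sum_{k=0}^{\lfloor n/2\rfloor}(k+1)$ total, with the same quadratic-ratio limit. The only difference is presentational: you make the $(k,j)$-parameterization and the translation of each sentence into an arithmetic condition (including the observation that $k\le n-k$ makes the smaller part the binding one for sentence type \ref{item:verticesWithOnlyOnesOver}) explicit, which the paper leaves implicit.
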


\begin{proof}
We address in order the sentence types in $\Phi$.

Type \ref{item:possibleDistances}: By definition, the spaces in $\firstaclass$ have all pairwise distances defined, and each distance is either $1$, $2$, or $3$. Thus $\textgoth{m}_{\mathcal{A},\phi}(n) = 1$ for every $n$.\\

Type \ref{item:noAdjacentThrees}: If this sentence were violated, then there would be a metric triangle $(3,3,x)$. No such metric triangle is allowed, and thus $\textgoth{m}_{\mathcal{A},\phi}(n) = 1$ for every $n$.\\

Type \ref{item:exactlyTwoParts}: All the spaces in $\firstaclass$ either have one part or two parts; thus $\textgoth{m}_{\mathcal{A},\phi}(n) = 1$ for every $n$.\\

Type \ref{item:threePairs}: Call one such sentence $\phi_p$. For a fixed $p,n$ where $p \leq \big\lfloor \frac{n}{2} \big\rfloor$, up to isomorphism the number of spaces in $\firstaclass$ on $n$ vertices which satisfy $\phi_p$ is 
$$\sum_{k \geq p}^{\big\lfloor \frac{n}{2} \big\rfloor} k - p +1.$$ Thus the proportion isomorphism classes of spaces in $\firstaclass$ on $n$ vertices which satisfy $\phi_p$ is 
$$ \frac{\sum\limits_{k \geq p}^{\big\lfloor \frac{n}{2} \big\rfloor} k - p +1}{\sum\limits_{i=1}^{\big\lfloor \frac{n}{2} \big\rfloor +1} i}.$$
This fraction goes to $1$ as $p$ stays fixed and $n$ goes to infinity.\\

Type \ref{item:verticesWithOnlyOnesOver}: For a fixed $p,n$ where $p \leq \big\lfloor \frac{n}{2} \big\rfloor$, the number of isomorphism classes of spaces in $\firstaclass$ on $n$ vertices which satisfy $\phi_p$ are again
$$\sum_{k \geq p}^{\big\lfloor \frac{n}{2} \big\rfloor} k - p +1.$$ Thus again the proportion of isomorphism classes of spaces in $\firstaclass$ on $n$ vertices which satisfy $\phi_p$ is 
$$\frac{\sum\limits_{k \geq p}^{\big\lfloor \frac{n}{2} \big\rfloor} k - p +1}{\sum\limits_{i=1}^{\big\lfloor \frac{n}{2} \big\rfloor +1} i}$$
and this fraction goes to $1$ as $p$ stays fixed and $n$ goes to infinity.
\end{proof}

We now construct the limiting structure $\Gamma_{as}$, and verify that $\Gamma_{as} \models \phi$ for every $\phi \in \Phi$.

\begin{defn}\label{defn:firstAS}
Let $\Gamma_{as}$ be a countable metric space which satisfies the following.
\begin{itemize}
    \item For every pair of points $u \neq v$, we have that $d(u,v) \in \{1,2,3\}$.
    \item $\Gamma_{as}$ is bipartite.
    \item The only distance within a part is $2$.
    \item The distances between parts are either $1$ or $3$.
    \item No vertex can have more than one other vertex at distance $3$.
    \item There are countably many points in a part which have exactly one point in the other part at distance $3$.    
    \item There are countably many points in a part which only have distance $1$ between it and the points in the other part.
\end{itemize}
\end{defn}

\begin{lemma}\label{obs:AS1unique}
There is up to isomorphism exactly one countable metric space which satisfies the description in Definition \ref{defn:firstAS}.
\end{lemma}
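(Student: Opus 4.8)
The plan is to prove both existence and uniqueness, constructing a witnessing space and then showing any two such spaces are isometric by an explicit bijection rather than an abstract back-and-forth. First I would record the key structural observation that makes $\Gamma_{as}$ rigid: the bipartition is recoverable from the metric alone. Two distinct vertices lie in the same part if and only if their distance is $2$ (within a part all distances are $2$, while across the parts distances are $1$ or $3$), so ``being equal or at distance $2$'' is an equivalence relation with exactly two classes. Hence any isometry between two spaces satisfying Definition \ref{defn:firstAS} must carry parts to parts (possibly swapping them), and to build an isometry it suffices to fix a correspondence of the two parts and match up the remaining data.

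Next I would decompose each part. Since no vertex has more than one vertex at distance $3$, the $3$-edges form a partial matching between the two parts; call a vertex \emph{matched} if it carries a $3$-edge and \emph{unmatched} otherwise. Every vertex is exactly one of these, because a vertex with no $3$-edge has only $1$-edges to the opposite part. The defining conditions then force each part to split as $M_i \sqcup U_i$ with $|M_i| = |U_i| = \aleph_0$, and the $3$-edges give a bijection $\mu \colon M_1 \to M_2$. Thus up to isomorphism the only data is: two countably infinite matched sets joined by a perfect matching, and two countably infinite unmatched sets, with all remaining cross distances equal to $1$ and all within-part distances equal to $2$.

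Given two such spaces $\Gamma$ and $\Gamma'$, I would then build the isometry directly. Choose any bijection $f_1 \colon M_1 \to M_1'$, define $f_2 = \mu' \circ f_1 \circ \mu^{-1} \colon M_2 \to M_2'$ so that the matching is respected, and choose arbitrary bijections $U_1 \to U_1'$ and $U_2 \to U_2'$; these assemble into a bijection $f \colon \Gamma \to \Gamma'$. Verifying that $f$ is an isometry is a short case check: within-part pairs have distance $2$ on both sides; a $3$-edge is sent to a $3$-edge precisely because $f$ commutes with the matchings; and every remaining cross pair has distance $1$ on both sides. For existence I would exhibit one concrete model---take four disjoint copies of $\mathbb{N}$ as $M_1, M_2, U_1, U_2$ with $\mu$ the identity matching---and check it satisfies every bullet of Definition \ref{defn:firstAS}; the triangle inequality and membership in $\firstaclass$ follow because the only triangles that arise are $(1,1,2)$, $(1,2,3)$, and $(2,2,2)$, exactly as in the opening Observation.

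The argument has no serious obstacle; the only points requiring care are confirming that the bipartition is genuinely metric-invariant, so that an isometry cannot scramble the two parts, and that the constructed $f$ transports $\mu$ to $\mu'$ correctly. I expect the matching-compatibility step---defining $f$ on $M_2$ as forced by its values on $M_1$ rather than freely---to be the one place where a naive ``independent bijections on every block'' attempt would fail.
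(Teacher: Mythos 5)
Your proof is correct and takes essentially the same route as the paper's: both fix an arbitrary bijection between the $3$-edge-carrying (matched) vertices of one part, extend it to the matched vertices of the opposite part by forcing compatibility with the $3$-matchings (your $f_2 = \mu' \circ f_1 \circ \mu^{-1}$ is exactly the paper's extension step), choose arbitrary bijections on the unmatched vertices of each part, and verify distance preservation by a short case analysis. Your two additions---the observation that the bipartition is recoverable from the metric, and the explicit existence witness built from four copies of $\mathbb{N}$---are sound refinements of points the paper leaves implicit.
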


We will need to carefully build an mapping which preserves the unique $3$-edge coming from some of the vertices.

\begin{proof}
Let $\Gamma_1$ and $\Gamma_2$ be two countable metric spaces which satisfy Definition \ref{defn:firstAS}. We construct an isomorphism between $\Gamma_1$ and $\Gamma_2$.

Let $(V_1)_3$ be the countable set of vertices from one part of $\Gamma_1$ which have $3$-edges. Let $(V_1)_3'$ be an analogous countable set of vertices in $\Gamma_2$, that is, the countable set of vertices from one part of $\Gamma_2$ which have $3$-edges. We note that it does not matter which parts of $\Gamma_1,\Gamma_2$ have been chosen, just that the vertices come from a single part. 

Let $f_0$ be any bijection between $(V_1)_3$ and $(V_1)_3'$. We create an extension $f_1$ of $f_0$ as follows: for every $u \in S_3$, let $v \in \Gamma_1$ be the unique vertex such that $d(u,v) = 3$, and for every $u' \in S_3'$ let $v' \in \Gamma_2$ be the unique vertex such that $d(u',v') = 3$. Then $f_1(v) = v'$ for every such $v \in \Gamma_1$ and $v' \in \Gamma_2$.

Now let $(V_1)_1$ be the countable set of vertices from the same part of $\Gamma_1$ which do not have vertices at distance $3$, and let $(V_1)_1'$ be an analogous countable set from the same part of $\Gamma_2$. Let $f_2$ be any bijection between $(V_1)_1$ and $(V_1)_1'$. Moreover let $(V_2)_1$ be the countable set of vertices from the other part of $\Gamma_1$ which do not have vertices at distance $3$, and $(V_2)_1'$ be the countable set of vertices from the other part of $\Gamma_2$ which do not have vertices at distance $3$. Let $f_3$ be any bijection between $(V_2)_1$ and $(V_2)_1'$.

Define $f = \bigcup_{i \leq 3} f_i$. 

Since every vertex in $\Gamma_1, \Gamma_2$ either does or does not have a vertex at distance $3$, $f$ is a map from all of $\Gamma_1$ to all of $\Gamma_2$. Moreover, since $f$ is the union of disjoint bijections, it is itself a bijection.

We now verify that $f$ preserves distances. Take $u,w \in \Gamma_1$. By the definition of $f_1$, $d(u,w) = 3 \Longleftrightarrow d(f(u),f(w)) = 3$. 

If $d(u,w) = 1$, then $u$ and $w$ are in opposite parts. 
If $u \in (V_1)_3$ and $w \in (V_2)_1$, then $f(u) \in (V_1)_3'$ and $f(w) \in (V_2)_1'$ and so $d(f(u),f(w)) = 1$. If $u \in (V_1)_3$ and $w \in V_2 \setminus (V_2)_1$ (that is, $w$ in the opposite part but not in $(V_2)_1$), then $d(f(u),f(w)) \in \{1,3\}$. Since $d(u,w) = 1$ implies that $w \neq v$, we have that $d(f(u),f(w)) = 1$. Our argument here is symmetric, so we have that $d(u,v) = 1 \Longleftrightarrow d(f(u),f(v)) = 1$.

As we already showed that $d(u,v) = 3 \Longleftrightarrow d(f(u),f(v)) = 3$, and $f$ is a bijection, we also have that $d(u,v) = 2 \Longleftrightarrow d(f(u),f(v)) = 2$.
\end{proof}

Below are two representations of $\Gamma_{as}$---one as a metric space, and one as a graph. Distance $1$ is represented by a solid black line, distance $2$ by a solid grey line, and distance $3$ by a dotted black line.

\input{Diag_firstGam_as.tex}

Note that $\Gamma_{as}$ is not metrically homogeneous; it is not even vertex-transitive.\\

%

Thus, we have shown the first part of Theorem \ref{thm:firstASnGen}.

\begin{thm}\label{thm:firstUnlabeled01}
The amalgamation class $\firstaclass$ satisfies an unlabeled 0--1 law.
\end{thm}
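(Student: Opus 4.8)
The plan is to show that the first-order theory $T$ axiomatized by $\Phi$ is complete, via $\aleph_0$-categoricity together with the \L{}o\'{s}--Vaught test, and then to read off the $0$--$1$ law from completeness by a standard compactness argument resting on Lemma \ref{lemma:firstclass01}. Since Lemma \ref{lemma:firstclass01} is already stated for isomorphism classes, this will yield precisely the \emph{unlabeled} law.

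First I would check that $\Gamma_{as} \models \Phi$ by matching each sentence type against the clauses of Definition \ref{defn:firstAS}: the clauses putting distances in $\{1,2,3\}$ and making $\Gamma_{as}$ bipartite with internal distance $2$ give types \ref{item:possibleDistances} and \ref{item:exactlyTwoParts}; the clause forbidding more than one vertex at distance $3$ gives type \ref{item:noAdjacentThrees}; and the two ``countably many points in a part'' clauses give the families \ref{item:threePairs} and \ref{item:verticesWithOnlyOnesOver} for every $p$.

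The core step is $\aleph_0$-categoricity. Let $\Gamma$ be any countable model of $\Phi$; I would verify that $\Gamma$ satisfies every clause of Definition \ref{defn:firstAS} and then invoke Lemma \ref{obs:AS1unique} to conclude $\Gamma \cong \Gamma_{as}$. Types \ref{item:possibleDistances}--\ref{item:exactlyTwoParts} pin down the global shape: type \ref{item:exactlyTwoParts} forbids a triple with exactly two $2$-edges, so the graph of $2$-edges has no induced path on three vertices and is a disjoint union of cliques, while the same sentence forces every triple to carry a $2$-edge, bounding the number of cliques by two; hence $\Gamma$ is bipartite with internal distances $2$ and cross distances in $\{1,3\}$. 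Type \ref{item:noAdjacentThrees} supplies the ``at most one vertex at distance $3$'' clause, so the $3$-edges form a partial matching across the two parts. Type \ref{item:threePairs}, holding for all $p$, then forces each part to be infinite (its distance-$3$ endpoints are distinct by type \ref{item:noAdjacentThrees}, so $p$ distance-$3$ pairs require $p$ distinct vertices in each part), and type \ref{item:verticesWithOnlyOnesOver} forces infinitely many vertices with no $3$-edge in each part. Lemma \ref{obs:AS1unique} then yields $\Gamma \cong \Gamma_{as}$, so $\Phi$ is $\aleph_0$-categorical; and since types \ref{item:threePairs} and \ref{item:verticesWithOnlyOnesOver} already preclude finite models, the \L{}o\'{s}--Vaught test gives that $T$ is complete. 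Finally, for an arbitrary $\phi \in \mathcal{L}$, completeness gives $T \vdash \phi$ or $T \vdash \neg\phi$; by compactness the relevant one follows from a finite $\Phi_0 \subseteq \Phi$, so since each member of $\Phi_0$ has unlabeled limiting proportion $1$ by Lemma \ref{lemma:firstclass01}, the finite conjunction $\bigwedge \Phi_0$ does too, forcing $\textgoth{m}_{\mathcal{A},\phi}(n) \to 1$ (resp. $\to 0$), and in either case the limit lies in $\{0,1\}$.

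The hard part will be the categoricity step, and within it the faithful translation of the existential families \ref{item:threePairs} and \ref{item:verticesWithOnlyOnesOver} into the ``in each part'' conditions that Lemma \ref{obs:AS1unique} consumes. The delicate point is that these conditions must hold in \emph{both} parts, not merely that there exist enough distance-$3$ pairs or enough no-$3$-edge vertices somewhere: if, say, every vertex of one part carried a $3$-edge while the other part contained all the no-$3$-edge vertices, the resulting structure would still satisfy the bare existential statements yet fail to be isomorphic to $\Gamma_{as}$ (a no-$3$-edge vertex in one part lies at distance $1$ from one in the opposite part, but at distance $2$ from one in the same part). So the argument must genuinely exploit type \ref{item:noAdjacentThrees} and the matching structure to place the asserted witnesses across both parts, which is exactly the structural content that the uniqueness lemma requires.
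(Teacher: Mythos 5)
Your route is the same as the paper's --- verify $\Gamma_{as} \models \Phi$, get $\aleph_0$-categoricity by reducing to Lemma \ref{obs:AS1unique}, conclude completeness, and transfer to the unlabeled law through Lemma \ref{lemma:firstclass01} --- and you supply the \L{}o\'{s}--Vaught and compactness details that the paper's two-sentence proof leaves implicit. But the categoricity step contains a genuine gap, at precisely the point you call delicate, and it cannot be repaired in the way you suggest. The structure you describe --- one part consisting entirely of matched vertices, the other part containing infinitely many matched vertices and all of the no-$3$-edge vertices --- is a countable model of \emph{all} of $\Phi$ as literally written, not merely of ``the bare existential statements'': types \ref{item:possibleDistances}--\ref{item:exactlyTwoParts} hold because it is a legitimate bipartite matching structure, type \ref{item:threePairs} holds because it has infinitely many $3$-edges, and type \ref{item:verticesWithOnlyOnesOver} holds because that sentence places no constraint on the mutual distances of the witnesses $u_i,v_j$, so all $2p$ of them may be taken from the unmatched vertices of the single part that has them. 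Since that model is not isomorphic to $\Gamma_{as}$, the theory $\Phi$ is not $\aleph_0$-categorical (indeed not even complete), so no argument ``exploiting type \ref{item:noAdjacentThrees} and the matching structure'' can place the type-\ref{item:verticesWithOnlyOnesOver} witnesses in both parts; that rescue is available only for type \ref{item:threePairs}, where a $3$-edge necessarily has one endpoint in each part. Your main-text claim that type \ref{item:verticesWithOnlyOnesOver} ``forces infinitely many vertices with no $3$-edge in each part'' is therefore false as stated.

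The repair is to strengthen the sentences, not the argument: add to type \ref{item:verticesWithOnlyOnesOver} the conjuncts $\bigwedge_{i\neq j} d(u_i,u_j)=2$, $\bigwedge_{i\neq j} d(v_i,v_j)=2$, and $\bigwedge_{i,j} d(u_i,v_j)\neq 2$, which in the presence of types \ref{item:possibleDistances}--\ref{item:exactlyTwoParts} force the $u_i$ into one part and the $v_j$ into the other. With this emendation your translation into Definition \ref{defn:firstAS} is correct, Lemma \ref{obs:AS1unique} applies, and the rest of your argument (\L{}o\'{s}--Vaught plus the compactness transfer from Lemma \ref{lemma:firstclass01}) goes through verbatim; moreover nothing is lost on the counting side, since the count the paper records for type \ref{item:verticesWithOnlyOnesOver} in Lemma \ref{lemma:firstclass01}, namely $\sum_{k\geq p}^{\lfloor n/2\rfloor}(k-p+1)$, is exactly the number of isomorphism classes having at least $p$ unmatched vertices in \emph{each} clique, i.e.\ that lemma already proves the strengthened sentences have limiting proportion $1$. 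You should know that this defect is shared by the paper itself, whose proof silently identifies each sentence with its intended gloss (``$p$ vertices in each part''); detecting it is to your credit, but your proposal errs in asserting it can be closed without modifying $\Phi$.
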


\begin{proof}
Lemma \ref{lemma:firstclass01} established a set of axioms which asymptotically go to $0$ among the isomorphism classes of spaces in $\firstaclass$. Lemma \ref{obs:AS1unique} and the observation that $\secondGamma$ satisfies $\Phi$ imply that this these axioms axiomatize a complete theory.
\end{proof}


\subsection{\texorpdfstring{Establishing a labeled 0--1 law for $\firstaclass$}{first01}}

It suffices to check that the proportion of metric spaces of $\firstaclass$, now no longer counting up to isomorphism, which satisfy the axiomatizing set of sentences $\Phi$ still asymptotically goes to $1$.
%
%
Recall that the following sentences axiomatize $\firstaclass$ when we vary $p$ over $\mathbb{N}$. 

\begin{enumerate}
    \item\label{item:possibleDistances:B1} $\forall u \forall v (u \neq v \implies (d(u,v) = 1 \vee d(u,v) = 2 \vee d(u,v) = 3))$ 
    \item\label{item:noAdjacentThrees:B1} $\forall u \neg \exists v_1, v_2$ such that $$ v_1 \neq v_2 \hspace{1mm} \wedge \hspace{1mm} d(u,v_1) = 3 \hspace{1mm} \wedge \hspace{1mm} d(u,v_2) = 3 $$
    \item\label{item:exactlyTwoParts:B1} $\forall u_1,u_2,u_3 \hspace{1cm}\bigwedge_{i,j} u_i \neq u_j \implies$
    \begin{multline*}
    d(u_1,u_2) = 2 \hspace{1mm} \wedge \hspace{1mm} d(u_1,u_3) = 2 \hspace{1mm} \wedge \hspace{1mm} d(u_2,u_3) = 2 \bigvee
d(u_1,u_2) = 2 \hspace{1mm} \wedge \hspace{1mm} d(u_1,u_3) \neq 2 \hspace{1mm} \wedge \hspace{1mm} d(u_2,u_3) \neq 2\\ \bigvee d(u_1,u_3) = 2 \hspace{1mm} \wedge \hspace{1mm} d(u_1,u_2) \neq 2 \hspace{1mm} \wedge \hspace{1mm} d(u_2,u_3) \neq 2 \bigvee d(u_2,u_3) = 2 \hspace{1mm} \wedge \hspace{1mm} d(u_1,u_2) \neq 2 \hspace{1mm} \wedge \hspace{1mm} d(u_1,u_3) \neq 2
    \end{multline*}
    \item\label{item:threePairs:B1} $\exists u_1, \cdots, u_p,$ $\exists v_1,\cdots,v_p$ such that
    $$\bigwedge_{i \neq j} u_i \neq u_j  \bigwedge_{i \neq j} v_i \neq v_j \bigwedge_i d(u_i,v_i) = 3$$ 
    \item\label{item:verticesWithOnlyOnesOver:B1} $\exists u_1,\cdots,u_p$ $\forall v$ such that
    $$\bigwedge_{i \neq j} u_i \neq u_j \bigwedge_i (\neg d(u_i,v) = 2 \implies d(u_i,v) = 1)$$ 
\end{enumerate}

We once more define $\Phi$ to be the collection of all such sentences for every $p \in \mathbb{N}$.

\begin{lemma}\label{lemma:LabeledFirstPhi01}
Let $\textgoth{m}_{\mathcal{A},\phi}(n)$ to be the proportion of $n$-vertex spaces in $\mathcal{A} = \firstaclass$ which satisfy a fixed $\phi \in \Phi$.
Then $$\lim_{n\rightarrow \infty} \textgoth{m}_{\mathcal{A},\phi}(n)=1$$ for every $\phi \in \Phi$.
\end{lemma}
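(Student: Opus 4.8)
The plan is to follow the same five-case structure as the proof of Lemma \ref{lemma:firstclass01}, but now with the labeled enumeration of Theorem \ref{thm:labeledfirstcount} in place of the isomorphism counts. Sentence types \ref{item:possibleDistances:B1}, \ref{item:noAdjacentThrees:B1}, and \ref{item:exactlyTwoParts:B1} are properties shared by \emph{every} space in $\firstaclass$: they merely record that all distances lie in $\{1,2,3\}$, that no vertex carries two $3$-edges, and that every triple has either one or three $2$-edges. These facts were established for all spaces in Section \ref{sec:firstAclass} and are insensitive to labeling, so $\textgoth{m}_{\mathcal{A},\phi}(n)=1$ for all $n$, just as before. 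The content of the lemma therefore lies entirely in types \ref{item:threePairs:B1} and \ref{item:verticesWithOnlyOnesOver:B1}.

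For these two, I would first record the parametrization used in Theorem \ref{thm:labeledfirstcount}: a labeled space is given by a partition of the $n$ vertices into $Q_1$ of size $k\le\lfloor n/2\rfloor$ and $Q_2$ of size $n-k$, together with a partial matching of size $j$ between them encoding the $3$-edges, and the number of such spaces is $c^{(k)}_j:=\binom{n}{k}\binom{k}{j}\binom{n-k}{j}j!$. A matching of size $j$ leaves $k-j$ vertices of $Q_1$ and $n-k-j$ of $Q_2$ with only $1$-edges, that is, $n-2j$ in all. Hence a space satisfies $\phi_p$ of type \ref{item:threePairs:B1} exactly when its matching size satisfies $j\ge p$, and it satisfies $\phi_p$ of type \ref{item:verticesWithOnlyOnesOver:B1} exactly when $n-2j\ge p$. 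It thus suffices to prove that, under the uniform distribution on labeled spaces, the matching size $j$ lies in the window $p\le j\le (n-p)/2$ with probability tending to $1$; equivalently, that the lower-tail family ($j<p$) and the upper-tail family ($j>(n-p)/2$) each have vanishing proportion.

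The key tool is the ratio $c^{(k)}_{j+1}/c^{(k)}_j=(k-j)(n-k-j)/(j+1)$, which for $k$ of order $n$ greatly exceeds $1$ at small $j$ and forces $c^{(k)}_j$ to concentrate near its mode $j^{\ast}(k)\approx k-\sqrt{k}$. For the lower tail I would first peel off the spaces with $k<p$, where $j\le k<p$ forces failure of type \ref{item:threePairs:B1}; these number at most $\sum_{k<p}\binom{n}{k}\sum_{j\le k}\binom{k}{j}\binom{n-k}{j}j!=O(n^{2p-2})$, negligible against the single denominator term $c^{(\lfloor n/2\rfloor)}_{\lfloor n/2\rfloor}\ge(\lfloor n/2\rfloor)!$, while for $k\ge p$ the ratio estimate makes $\sum_{j<p}c^{(k)}_j$ a vanishing fraction of $\sum_j c^{(k)}_j$. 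For the upper tail, the constraint $j>(n-p)/2$ confines $k$ to the bounded window $[(n-p)/2,\lfloor n/2\rfloor]$ and places $j$ above the mode; applying the ratio estimate in reverse across the $\Theta(\sqrt{n})$ indices separating the bad region from $j^{\ast}(k)$ shows this weight is likewise a vanishing fraction of the total.

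I expect the main obstacle to be making the concentration uniform across all admissible $k$ at once rather than for a single $k$: the denominator $N(n)=\sum_k\sum_j c^{(k)}_j$ has terms that vary super-exponentially in $k$, so before comparing any tail against it I must confirm it is dominated by the contributions with $k\approx\lfloor n/2\rfloor$. The most delicate piece is the upper-tail bound for type \ref{item:verticesWithOnlyOnesOver:B1}, which rests on the mode sitting $\Theta(\sqrt{n})$ below the maximal matching size. Once these two-sided estimates are in hand, the proportions in question tend to $1$ for every $\phi\in\Phi$, and the argument closes exactly as in the unlabeled case.
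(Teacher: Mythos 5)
Your proposal is correct and takes essentially the same route as the paper: the first three sentence types hold in every space of $\firstaclass$, and the two existential types reduce to showing that tail ratios of the labeled counts $\binom{n}{k}\binom{k}{j}\binom{n-k}{j}j!$ over the $(k,j)$-parametrization tend to $1$. The only differences are that the paper merely asserts these limits---your mode-location and ratio-concentration estimates supply the justification it omits---and that for the last sentence type the paper counts via the sufficient condition $j\le k-p$ while you use the exact criterion $n-2j\ge p$; both yield the lemma.
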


\begin{proof}
Sentences \ref{item:possibleDistances:B1},  \ref{item:noAdjacentThrees:B1}, and \ref{item:exactlyTwoParts:B1} again hold for every space in $\firstaclass$, making the proportion constantly $1$.

For a fixed $n,p$, there are $$\sum_{k=0}^{\lfloor \frac{n}{2} \rfloor} \sum_{j \geq p}^{k} \binom{k}{j} \binom{n-k}{j}j!$$ spaces on $n$ vertices in $\firstaclass$ which satisfy sentence \ref{item:threePairs:B1}. As $p$ is fixed, we have that $$\lim_{n \rightarrow \infty} \frac{\sum\limits_{k=0}^{\lfloor \frac{n}{2} \rfloor} \sum\limits_{j \geq p}^{k} \binom{k}{j} \binom{n-k}{j}j!}{\sum\limits_{k=0}^{\lfloor \frac{n}{2} \rfloor} \sum\limits_{j =0}^{k} \binom{k}{j} \binom{n-k}{j}j!} = 1.$$

Similarly, for a fixed $n,p$ there are 
$$\sum_{k=0}^{\lfloor \frac{n}{2} \rfloor} \sum_{j =0}^{k-p} \binom{k}{j} \binom{n-k}{j}j!$$ spaces on $n$ vertices in $\firstaclass$ which satisfy sentence \ref{item:verticesWithOnlyOnesOver:B1}. As $p$ is fixed, we have that 
$$\lim_{n \rightarrow \infty} \frac{\sum\limits_{k=0}^{\lfloor \frac{n}{2} \rfloor} \sum\limits_{j =0}^{k-p} \binom{k}{j} \binom{n-k}{j}j!}{\sum\limits_{k=0}^{\lfloor \frac{n}{2} \rfloor} \sum\limits_{j =0}^{k} \binom{k}{j} \binom{n-k}{j}j!} = 1.$$
\end{proof}

Thus we have established another part of Theorem \ref{thm:firstASnGen}.

\begin{lemma}\label{lemma:firstLabeled01}
The amalgamation class $\firstaclass$ satisfies a labeled 0--1 law.
\end{lemma}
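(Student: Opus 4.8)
The plan is to mirror the proof of the unlabeled 0--1 law (Theorem \ref{thm:firstUnlabeled01}), replacing the isomorphism-class density estimate of Lemma \ref{lemma:firstclass01} with its labeled counterpart, Lemma \ref{lemma:LabeledFirstPhi01}. The only genuinely new input is the density statement itself, which has already been supplied; the completeness of the axiomatization $\Phi$ is a property of the theory alone, independent of whether we count spaces up to isomorphism or not, and so it transfers verbatim from the unlabeled setting.

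First I would recall why $\Phi$ is complete. By Lemma \ref{obs:AS1unique}, the description in Definition \ref{defn:firstAS} has a unique countable model $\Gamma_{as}$ up to isomorphism, and $\Gamma_{as} \models \Phi$. The existential sentences of types \ref{item:threePairs:B1} and \ref{item:verticesWithOnlyOnesOver:B1}, ranging over all $p \in \mathbb{N}$, force every model of $\Phi$ to contain arbitrarily many vertices, so $\Phi$ has no finite models. Being $\aleph_0$-categorical with no finite models, $\Phi$ is complete by the \L{}o\'{s}--Vaught test.

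Next I would assemble the 0--1 law from completeness together with Lemma \ref{lemma:LabeledFirstPhi01}. Let $\psi$ be an arbitrary sentence in $\mathcal{L} = \{R_1, R_2, R_3\}$. By completeness, either $\Phi \vdash \psi$ or $\Phi \vdash \neg \psi$; assume the former, the other case being symmetric. By the compactness theorem, some finite conjunction $\phi_1 \wedge \cdots \wedge \phi_m$ of sentences drawn from $\Phi$ already entails $\psi$. By Lemma \ref{lemma:LabeledFirstPhi01}, $\textgoth{m}_{\mathcal{A},\phi_i}(n) \to 1$ as $n \to \infty$ for each $i$, hence the proportion of $n$-vertex spaces satisfying the finite conjunction also tends to $1$, and therefore so does $\textgoth{m}_{\mathcal{A},\psi}(n)$. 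In the symmetric case $\textgoth{m}_{\mathcal{A},\psi}(n) \to 0$. Either way the limit lies in $\{0,1\}$, which is exactly the labeled 0--1 law.

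I do not anticipate a real obstacle here: all the combinatorial work has been absorbed into Lemma \ref{lemma:LabeledFirstPhi01}, and completeness is inherited from the unlabeled case. The one step deserving explicit mention is the use of compactness to reduce ``$\Phi \vdash \psi$'' to a finite subconjunction, so that a finite intersection of almost-sure events remains almost sure; this is routine, but it is precisely what licenses passing from the axioms individually holding almost surely to an arbitrary sentence being decided almost surely.
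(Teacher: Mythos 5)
Your proof is correct and takes essentially the same route as the paper: the paper's own proof of Lemma \ref{lemma:firstLabeled01} simply cites Lemma \ref{obs:AS1unique} (giving $\aleph_0$-categoricity, hence completeness, of $\Phi$) together with the labeled density estimate of Lemma \ref{lemma:LabeledFirstPhi01}. You have merely spelled out the standard transfer argument---the \L{}o\'{s}--Vaught test and the compactness/finite-conjunction step---that the paper leaves implicit.
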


\begin{proof}
As with the proof of Theorem \ref{thm:unlabeledfirstcount}, this follows from Lemmas \ref{obs:AS1unique} and \ref{lemma:LabeledFirstPhi01}.
\end{proof}

\subsection{\texorpdfstring{Establishing an unlabeled 0--1 law for $\secondaclass$}{second01}}

We will show that sentences of the following forms axiomatize the almost sure theory of $\secondaclass$.

\begin{enumerate} 
        \item\label{item:all12or3} $\forall u \forall v (u \neq v \implies (d(u,v) = 1 \vee d(u,v) = 2 \vee d(u,v) = 3))$ 
        
        This sentence says that the only possible distances are $1,2$, and $3$.
    \item\label{item:exactlyTwoPartsAgain} $\forall u_1,u_2,u_3 \hspace{1cm}\bigwedge_{i,j} u_i \neq u_j \implies$
    \begin{multline*}
    d(u_1,u_2) = 2 \hspace{1mm} \wedge \hspace{1mm} d(u_1,u_3) = 2 \hspace{1mm} \wedge \hspace{1mm} d(u_2,u_3) = 2 \bigvee
d(u_1,u_2) = 2 \hspace{1mm} \wedge \hspace{1mm} d(u_1,u_3) \neq 2 \hspace{1mm} \wedge \hspace{1mm} d(u_2,u_3) \neq 2\\ \bigvee d(u_1,u_3) = 2 \hspace{1mm} \wedge \hspace{1mm} d(u_1,u_2) \neq 2 \hspace{1mm} \wedge \hspace{1mm} d(u_2,u_3) \neq 2 \bigvee d(u_2,u_3) = 2 \hspace{1mm} \wedge \hspace{1mm} d(u_1,u_2) \neq 2 \hspace{1mm} \wedge \hspace{1mm} d(u_1,u_3) \neq 2
    \end{multline*}
    
    This sentence says that every triple of vertices must have either three $2$-edges or exactly one $2$-edge
    \item\label{item:infinitelyManyOnesForEach}
    $\forall u \exists v_1,\cdots,v_p$ such that
    $$\bigwedge_{i \neq j} v_i \neq v_j \bigwedge_j d(u,v_i) = 1$$
    
    This sentence says that every vertex has at least $p$ many $1$-edges
    \item\label{item:infinitelyManyThreesForEach}
     $\forall u \exists v_1,\cdots,v_p$ such that
     $$\bigwedge_{i \neq j} v_i \neq v_j \bigwedge_i d(u,v_i) = 3$$
     
     This sentence says that every vertex has at least $p$ many $3$-edges.
\end{enumerate}

\textit{Again, define $\Phi$ to be the collection of all such sentences for every $p \in \mathbb{N}$.}

We have the following.

\begin{lemma}\label{lemma:secondclass01}
Define $\textgoth{m}_{\mathcal{A},\phi}(n)$ to be the proportion of isomorphism classes of $n$-vertex spaces in $\mathcal{A}=\secondaclass$ which satisfy a fixed $\phi \in \Phi$.

Then $$\lim_{n \to \infty} \textgoth{m}_{\mathcal{A},\phi}(n) = 1$$ for every $\phi \in \Phi$.
\end{lemma}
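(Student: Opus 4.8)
The plan is to separate the four sentence schemes of $\Phi$ into two \emph{structural} types and two \emph{density} types, paralleling the proof of Lemma~\ref{lemma:firstclass01}. Sentences of type~\ref{item:all12or3} and type~\ref{item:exactlyTwoPartsAgain} are satisfied by \emph{every} space in $\secondaclass$: by Lemma~\ref{lemma:oneOrTwo2cliques:second} each such space is a disjoint pair of $2$-cliques joined only by $1$- and $3$-edges, so all distances lie in $\{1,2,3\}$, and any triple either sits inside a single clique (three $2$-edges) or has two vertices in one clique and one in the other (exactly one $2$-edge). Hence $\textgoth{m}_{\mathcal{A},\phi}(n)=1$ identically for these $\phi$, and the limit is trivially $1$.

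For the density sentences (type~\ref{item:infinitelyManyOnesForEach}: every vertex has at least $p$ one-edges; type~\ref{item:infinitelyManyThreesForEach}: every vertex has at least $p$ three-edges) I fix $p$ and show the complementary proportion tends to $0$. First I reduce to balanced part sizes. A vertex in the part of size $m:=\min(k,n-k)$ has only $m$ cross-edges, so either sentence forces $m\ge p$; moreover the number of isomorphism classes with part sizes $(k,n-k)$ is concentrated on $k$ near $\lfloor n/2\rfloor$ (the pattern-rich balanced structures vastly outnumber the rest, and part sizes $m<p$ are outright excluded), so isomorphism classes with $m$ small contribute a vanishing fraction of the total. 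It therefore suffices to control the regime $k\approx n/2$.

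In that regime the decisive step is to pass from isomorphism classes to labeled structures. The automorphisms of such a space are exactly the permutations of the two parts that preserve the bipartite pattern of $3$-edges, i.e.\ the stabilizer of that pattern inside $S_k\times S_{n-k}$. For $k$ near $n/2$ a typical pattern is rigid (trivial stabilizer), so each isomorphism class corresponds to $\approx k!\,(n-k)!$ labeled spaces and the proportion of isomorphism classes satisfying a minimum-degree condition agrees, up to $o(1)$, with the proportion among labeled spaces. The labeled proportion is then handled by concentration: drawing the $3$-edges uniformly, each vertex receives a $\mathrm{Binomial}(m,1/2)$ number of $3$-edges (and complementarily of $1$-edges) with mean $\asymp n/4\gg p$, so the chance that a fixed vertex has fewer than $p$ edges of a given colour is $\sum_{i<p}\binom{m}{i}2^{-m}=2^{-\Omega(n)}$, and a union bound over the $n$ vertices keeps this $o(1)$. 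Thus the bad labeled proportion, hence the bad unlabeled proportion, tends to $0$ and $\textgoth{m}_{\mathcal{A},\phi}(n)\to 1$.

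The main obstacle is exactly this unlabeled-to-labeled passage. Unlike $\firstaclass$, where the $3$-edges form a partial matching and the isomorphism type is pinned down by the single integer ``number of $3$-edges,'' here the $3$-edges form an arbitrary bipartite graph and a degree sequence is \emph{not} a complete isomorphism invariant. One must therefore justify that in the dominant balanced regime almost every isomorphism class has automorphism group of size $\approx k!\,(n-k)!$ (equivalently, that the orbit-count average is dominated by its identity term), so that counting isomorphism classes and counting labeled spaces yield the same asymptotic proportions. Establishing this rigidity, rather than the concentration estimate itself, is where the real work lies; once it is in hand, the same argument delivers both the unlabeled and the labeled $0$--$1$ laws for $\secondaclass$.
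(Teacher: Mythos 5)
Your treatment of sentence types \ref{item:all12or3} and \ref{item:exactlyTwoPartsAgain} agrees with the paper's (every space in $\secondaclass$ satisfies them outright). For the two density schemes, though, you take a genuinely different route from the paper, and as written your route has a gap at its central step. The paper's proof is a short direct count of isomorphism classes: using its enumeration of the classes with parts $(k,n-k)$ by nondecreasing $k$-tuples of $3$-degrees (Theorem \ref{thm:unlabeledsecondcount}), it counts the classes satisfying type \ref{item:infinitelyManyOnesForEach} by restricting the tuple entries to at most $n-k-p$ (and $k\geq p$), notes the ratio of the two tuple counts tends to $1$, and disposes of type \ref{item:infinitelyManyThreesForEach} by the symmetry between the roles of the distances $1$ and $3$. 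No labeled counting and no rigidity considerations enter at all.

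Your plan --- concentration for labeled spaces plus a labeled-to-unlabeled transfer --- could in principle be carried out, but the transfer is precisely what you do not prove, and you yourself flag it as ``where the real work lies.'' Note also that it needs more than the qualitative claim that ``a typical pattern is rigid.'' A class with a nontrivial automorphism group may be represented by as few as $n!/\bigl(2\,k!\,(n-k)!\bigr)$ labeled spaces, whereas a rigid class is represented by $n!$ of them; so symmetric classes could constitute a non-negligible fraction of \emph{classes} even while symmetric labeled spaces are a negligible fraction of \emph{labeled} spaces. To rule this out you need a quantitative bound: the number of labeled spaces with parts $(k,n-k)$ admitting a nontrivial automorphism must be smaller than $2^{k(n-k)}$ by a factor that beats $k!\,(n-k)!\approx 2^{n\log n}$ --- e.g. a bound of the shape $k!\,(n-k)!\,2^{k(n-k)-c n^2}$. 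This is exactly what the paper proves later (the orbit-counting asymmetry lemma used for its \emph{labeled} 0--1 law), so your proposal can be completed by front-loading that argument; but as it stands the lemma is not proved. One incidental merit of your approach: your observation that the bipartite $3$-degree sequence is not a complete isomorphism invariant is correct, and a completed version of your argument would not lean on the paper's tuple enumeration, which does treat it as one.
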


\begin{proof}
We address in order the sentence types in $\Phi$.

Type \ref{item:all12or3} follows as it did with $\firstaclass$.\\

Type \ref{item:exactlyTwoPartsAgain}: Again, since every space in $\secondaclass$ has either one or two parts, we have that $\textgoth{m}_{\mathcal{A},\phi}(n) = 1$ for all $n$.\\

Type \ref{item:infinitelyManyOnesForEach}: 
Since $k = 0$ corresponds to a vanishingly small percentage of the isomorphism classes of spaces in $\secondaclass$, we consider the cases where $k \geq 1$.

Of the $$\sum_{k \geq 1}^{\big\lfloor \frac{n}{2} \big\rfloor}  \sum_{i_k=1}^{n-k+1} \sum_{i_{k-1}=1}^{i_k} \cdots \sum_{i_1 = 1}^{i_2} i$$ many isomorphism classes of spaces consisting of two $2$-cliques, $$\sum_{k \geq p}^{\big\lfloor \frac{n}{2} \big\rfloor}  \sum_{i_k=1}^{n-k+1-p} \sum_{i_{k-1}=1}^{i_k} \cdots \sum_{i_1 = 1}^{i_2} i$$ of them will satisfy $\phi$.

Thus for a fixed $p$, the proportion isomorphism classes of spaces in $\secondaclass$ which satisfies $\phi$ goes to $1$ as $n$ goes to infinity.\\ 

Type \ref{item:infinitelyManyThreesForEach}: Since $1$ and $3$ are equivalent in $\secondaclass$ (that is, they appear symmetrically in the allowed triangles), this is equivalent to the fact that the proportion of the previous type going to $1$.
\end{proof}



\begin{lemma}\label{lemma:secondASisCat}
Let $\Gamma_1, \Gamma_2$ be two countable metric spaces with distances in $\{1,2,3\}$ for which $\Gamma_1,\Gamma_2 \models \phi$ for every $\phi \in \Phi$. Then $\Gamma_1 \simeq \Gamma_2$.
\end{lemma}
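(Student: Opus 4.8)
The plan is to prove this $\aleph_0$-categoricity statement by a back-and-forth argument, after first extracting from the axioms the common coarse structure of any model. First I would use sentences of types (\ref{item:all12or3}) and (\ref{item:exactlyTwoPartsAgain}) to pin down the shape of $\Gamma_1$ and $\Gamma_2$. Type (\ref{item:exactlyTwoPartsAgain}) says that among any three distinct vertices the number of $2$-edges is $1$ or $3$, never $0$ or $2$; the prohibition of exactly two $2$-edges makes the relation $d(u,v)=2$ transitive, so it is an equivalence relation, while the prohibition of zero $2$-edges forbids a triple of pairwise non-$2$ vertices and hence caps the number of classes at two. Thus each $\Gamma_i$ splits into at most two parts, with all intra-part distances equal to $2$ and all inter-part distances lying in $\{1,3\}$, exactly as in the spaces identified in Section \ref{sec:secondAclass} and in $\secondGamma$.

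Next I would use types (\ref{item:infinitelyManyOnesForEach}) and (\ref{item:infinitelyManyThreesForEach}) to see that both parts are nonempty and countably infinite: since every vertex has arbitrarily many $1$-edges and arbitrarily many $3$-edges, all of which must run to the opposite part, neither model can consist of a single part, and each part must be infinite. Hence $\Gamma_1$ and $\Gamma_2$ share the same coarse type --- two countably infinite parts $A_i, B_i$ with the distance pattern above --- and the only remaining freedom is the arrangement of $1$- and $3$-edges across the parts.

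Now I would carry out the back-and-forth. Enumerate $\Gamma_1 = \{x_0, x_1, \ldots\}$ and $\Gamma_2 = \{y_0, y_1, \ldots\}$ and build an increasing chain of finite partial isometries $f_0 \subseteq f_1 \subseteq \cdots$, each sending $A_1$ into $A_2$ and $B_1$ into $B_2$ (fixing, at the first step, which part maps to which). The key structural point is that, because the intra-part distance is constantly $2$, any partial map respecting the part-labelling and the inter-part $1/3$ pattern on its finite domain is automatically an isometry. So to extend $f_n$ by a new point $x$ of, say, $A_1$, it suffices to find $y \in A_2$ whose distances to the already-placed image vertices of $B_1$ match those of $x$; concretely, $y$ must be at distance $1$ from a prescribed finite set $S \subseteq B_2$ and at distance $3$ from a disjoint prescribed finite set $T \subseteq B_2$.

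The main obstacle, and the heart of the argument, is exactly this one-point extension: realizing an arbitrary finite $\{1,3\}$-pattern over a finite set of opposite-part vertices. This is where the richness encoded by the sentence schemes of types (\ref{item:infinitelyManyOnesForEach}) and (\ref{item:infinitelyManyThreesForEach}) must be invoked --- varying $p$ guarantees that each part supplies enough vertices carrying the required $1$- and $3$-edges for any prescribed pattern to be met, so a suitable $y$ always exists (and symmetrically for the ``back'' steps, where one extends the inverse). Alternating forth and back so that every $x_i$ eventually enters a domain and every $y_j$ eventually enters a range, the union $f = \bigcup_n f_n$ is a distance-preserving bijection of $\Gamma_1$ onto $\Gamma_2$, giving $\Gamma_1 \simeq \Gamma_2$ as claimed.
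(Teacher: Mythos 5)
Your proof follows the same route as the paper's: schemes (\ref{item:all12or3}) and (\ref{item:exactlyTwoPartsAgain}) force two parts with intra-part distance $2$ and inter-part distances in $\{1,3\}$, schemes (\ref{item:infinitelyManyOnesForEach}) and (\ref{item:infinitelyManyThreesForEach}) make both parts infinite, and a back-and-forth reduces everything to a one-point extension: given disjoint finite sets $S,T$ in one part, find a vertex of the other part at distance $1$ from all of $S$ and distance $3$ from all of $T$. The paper isolates exactly this extension property as the axiom $\psi_{q,p,r}$ in (\ref{eqn:extAx2}).

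However, the step where you justify the one-point extension --- ``varying $p$ guarantees that each part supplies enough vertices carrying the required $1$- and $3$-edges for any prescribed pattern to be met'' --- is a genuine gap, not a proof. The degree schemes (\ref{item:infinitelyManyOnesForEach}) and (\ref{item:infinitelyManyThreesForEach}) say that each vertex has many $1$-edges and many $3$-edges; they say nothing about which \emph{patterns} of $1$'s and $3$'s over a finite set are realized. Concretely, consider the space with parts $A=\{a_i\}_{i\in\mathbb{N}}$ and $B=\{b_j\}_{j\in\mathbb{N}}$, all intra-part distances $2$, and $d(a_i,b_j)=1$ when $i+j$ is even, $d(a_i,b_j)=3$ when $i+j$ is odd. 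Every sentence of $\Phi$ holds here (each vertex has infinitely many $1$-edges and infinitely many $3$-edges, and every triple carries one or three $2$-edges), yet no vertex is at distance $1$ from both $a_0$ and $a_1$, whereas $\secondGamma$ realizes the allowed triangle $(1,1,2)$ over any $2$-edge. So this space and $\secondGamma$ are non-isomorphic countable models of $\Phi$: the one-point extension is not a consequence of $\Phi$, and the lemma as literally stated fails. You have in fact reproduced the weak point of the paper's own proof, which asserts without argument that $\psi_{q,p,r}$ ``is implied by'' the sentences of types (\ref{item:exactlyTwoPartsAgain}), (\ref{item:infinitelyManyOnesForEach}), and (\ref{item:infinitelyManyThreesForEach}). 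The repair, for both your argument and the paper's, is to include the extension axioms $\psi_{q,p,r}$ themselves in $\Phi$: they do hold almost surely in $\secondaclass$ (the dominant spaces are two nearly equal parts with independent uniformly random $1/3$-edges, so the standard first-moment computation for the random graph applies), and once they are available your back-and-forth goes through verbatim.
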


A standard back-and-forth argument works here.

\begin{proof}
The following extension axiom will prove particularly useful:\\

  $\psi_{q,p,r}:= \forall x_1,\cdots,x_q \forall y_1, \cdots,y_p \forall z_1, \cdots, z_q$
    \begin{multline}\label{eqn:extAx2}
    \bigwedge_{i \neq j} d(x_i,x_j) = 2 \bigwedge_{i \neq j} d(y_i,y_j) = 2 \bigwedge_{i \neq j} d(z_i,z_j) = 2\\
    \bigwedge_{i,j} d(x_i,y_j) \neq 2 \bigwedge_{i,j} d(x_i,z_j) \neq 2 \bigwedge_{i,j} d(y_i,z_j) \neq 2 \\ 
    \implies \exists w \ni \bigwedge_i d(w,x_i) = 1 \bigwedge_i d(w,y_i) = 2 \bigwedge_i d(w,z_i) = 3.\tag{*}
\end{multline}

This sentence is implied by sentences $\phi$ of types \ref{item:exactlyTwoPartsAgain}, \ref{item:infinitelyManyOnesForEach}, and \ref{item:infinitelyManyThreesForEach}. Thus $\Gamma_1,\Gamma_2 \models \psi_{q,p,r}$ for every $q,p,r \in \mathbb{N}$.\\

We use a straightforward back-and-forth argument to show that $\Gamma_1 \simeq \Gamma_2$.

We construct an isomorphism $f: \Gamma_1 \rightarrow \Gamma_2$. Let $\{a_0,a_1,\cdots\}$ be a list of the points in $\Gamma_1$ and let $\{b_0,b_1,\cdots\}$ be a list of the points in $\Gamma_2$.\\

Let $f_0 = \emptyset$.

At stage $s +1 = 2i+1$, we check if $a_i$ is in the domain of $f_s$. If it is, then we set $f_{s+1} = f_s$. If it is not, then let $\alpha_1,\cdots,\alpha_m$ list the points in the domain of $f_s$. Define $X := \{j \leq m \colon d(\alpha_j,a_i) = 1\}$, $Y := \{j \leq m \colon d(\alpha_j,a_i) = 2\}$, and $Z := \{j \leq m \colon d(\alpha_j,a_i) = 3\}$. 

By Axiom \ref{eqn:extAx2}, we can find $b \in \{b_0,b_1,\cdots\} \setminus Im(f_s)$ such that $d(f_s(\alpha_j),b) = 1$ for $j \in X$, $d(f_s(\alpha_j),b) = 2$ for $j \in Y$, and $d(f_s(\alpha_j),b) = 3$ for $j \in Z$. Define $f_{s+1} = f_s \cup \{(a_i,b)\}$.

At stage $s+1 = 2i+2$, we check that $b_i$ is in the image of $f_{s+1}$. The argument here is analogous to that of $s+1 = 2i+1$.

Let $f = \bigcup f_s$. We see that $f$ is an isomorphism, as we have ensured that it is both one-to-one and onto and preserves distances.

Thus $\Gamma_1 \simeq \Gamma_2$, and therefore $\Phi$ is $\aleph_0$-categorical.
\end{proof}

We have now the first part of Theorem \ref{thm:secondASnGen}.
\begin{thm}\label{thm:UnlabeledSecond01}
The amalgamation class $\secondaclass$ satisfies an unlabeled 0--1 law.
\end{thm}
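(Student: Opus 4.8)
The plan is to assemble the two facts already in hand into the standard template for extracting a 0--1 law from a complete almost sure theory, exactly as was done for $\firstaclass$ in Theorem \ref{thm:firstUnlabeled01}. Lemma \ref{lemma:secondclass01} gives that every $\phi \in \Phi$ satisfies $\textgoth{m}_{\mathcal{A},\phi}(n) \to 1$ as $n \to \infty$ (counting up to isomorphism), while Lemma \ref{lemma:secondASisCat} gives that $\Phi$ is $\aleph_0$-categorical. The target is to promote these to the statement that the theory $T$ axiomatized by $\Phi$ is complete, after which the 0--1 law drops out by compactness.

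First I would check the hypotheses of the \L o\'s--Vaught test. Consistency of $\Phi$ is witnessed by $\secondGamma$, which satisfies every sentence in $\Phi$ by its description in Section \ref{sec:idGams}. That $T$ has no finite models follows from the type \ref{item:infinitelyManyOnesForEach} and type \ref{item:infinitelyManyThreesForEach} sentences: letting $p$ range over $\mathbb{N}$, they force every vertex to have at least $p$ points at distance $1$ and at least $p$ points at distance $3$, so every model is infinite. Since $\mathcal{L} = \{R_1,R_2,R_3\}$ is finite and $T$ is $\aleph_0$-categorical with no finite models, the \L o\'s--Vaught test yields that $T$ is complete.

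Now let $\sigma$ be an arbitrary sentence over $\mathcal{L}$. By completeness, either $T \models \sigma$ or $T \models \neg\sigma$. Suppose $T \models \sigma$; then by compactness $\sigma$ is a consequence of a finite subset $\{\phi_1,\dots,\phi_m\} \subseteq \Phi$, so every $n$-vertex space satisfying all of $\phi_1,\dots,\phi_m$ also satisfies $\sigma$. Hence $1 - \textgoth{m}_{\mathcal{A},\sigma}(n) \le \sum_{i=1}^m \big(1 - \textgoth{m}_{\mathcal{A},\phi_i}(n)\big) \to 0$ by Lemma \ref{lemma:secondclass01}, so $\textgoth{m}_{\mathcal{A},\sigma}(n) \to 1$. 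If instead $T \models \neg\sigma$, the same argument applied to $\neg\sigma$ gives $\textgoth{m}_{\mathcal{A},\sigma}(n) \to 0$. Either way the limit lies in $\{0,1\}$, which is the unlabeled 0--1 law.

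The genuine content has already been discharged in the back-and-forth of Lemma \ref{lemma:secondASisCat}, so what remains here is largely bookkeeping. The one point that deserves attention is confirming the \L o\'s--Vaught hypotheses --- in particular that $\Phi$ excludes finite models --- together with the standard observation that completeness is precisely what allows compactness to collapse each sentence to a finite fragment of $\Phi$, thereby transferring the almost-sure behaviour of the individual axioms to $\sigma$.
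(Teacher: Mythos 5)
Your proposal is correct and takes essentially the same approach as the paper: the paper's proof likewise combines Lemma \ref{lemma:secondclass01}, the observation that $\secondGamma \models \Phi$, and the $\aleph_0$-categoricity from Lemma \ref{lemma:secondASisCat} to conclude that $\Phi$ axiomatizes a complete theory, from which the unlabeled 0--1 law follows. You have simply made explicit the \L{}o\'s--Vaught test (including the no-finite-models check) and the compactness/union-bound transfer that the paper leaves implicit.
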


\begin{proof}
Lemma \ref{lemma:secondclass01} established a set of axioms for which the proportion of isomorphism classes of spaces in $\secondaclass$ which satisfy them asymptotically go to $1$. The observation that $\secondGamma \models \Phi$ together with Lemma \ref{lemma:secondASisCat} show that this set axiomatizes a complete theory. Thus, $\secondaclass$ satisfies an unlabeled 0--1 law.
\end{proof}

\subsection{\texorpdfstring{Establishing a labeled 0--1 law for $\secondaclass$}{second01}}

The method we used in Lemma \ref{lemma:LabeledFirstPhi01} is difficult to apply here; the combinatorics involved in counting the number of spaces in $\secondaclass$ which satisfy some of the sentences in $\Phi$ becomes complicated.

Instead, we prove asymptotically all the metric spaces in $\secondaclass$ are \textit{asymmetric}, i.e. their only automorphism is the identity automorphism. We use ideas from the proof found in \cite{NesConstraint} that almost all graphs are asymmetric. We note that such an argument would not have been possible to establish a labeled 0--1 law for $\firstaclass$---not to mention inefficient, as the combinatorial argument requires fewer steps---since those spaces cannot be constructed probabilistically.

\begin{lemma}
Almost all metric spaces in $\secondaclass$ are asymmetric.
\end{lemma}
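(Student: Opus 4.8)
The plan is to exploit the explicit description from Lemma~\ref{lemma:oneOrTwo2cliques:second}: a labeled space in $\secondaclass$ on $n$ vertices is the same datum as a partition of the vertex set into two $2$-cliques $A,B$ (the parts are recovered from the metric, since for distinct $u,v$ we have $d(u,v)=2$ exactly when $u,v$ lie in a common part) together with an arbitrary assignment of $1$- and $3$-edges across the parts. Consequently every isometry permutes the parts, so $\mathrm{Aut}$ of the space is precisely the automorphism group of the associated bipartite graph $H$ (with $3$-edges read as edges), forced to preserve each side when $|A|\neq|B|$ and allowed also to swap sides when $|A|=|B|$. Thus asymmetry of the space is equivalent to rigidity of $H$ as a two-sided bipartite graph, and I would model a uniform labeled space by choosing the partition and then each of the $k(n-k)$ cross-distances independently and uniformly from $\{1,3\}$, in accordance with the count $\sum_k\binom{n}{k}2^{k(n-k)}$ of Theorem~\ref{thm:labeledsecondcount}. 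First I would reduce to near-balanced partitions: since $\binom{n}{k}\le\binom{n}{\lfloor n/2\rfloor}$ and $k(n-k)$ is maximized at $k=\lfloor n/2\rfloor$, the terms with $k\le n/3$ sum to at most $n\,2^{n}\,2^{2n^2/9}=o(2^{n^2/4})$, a vanishing fraction of the total, so with probability $1-o(1)$ both parts have size at least $n/3$ and it suffices to prove rigidity in that regime.

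The core is an Erd\H{o}s--R\'enyi--style orbit-counting estimate (following the method of \cite{NesConstraint}) applied to side-preserving automorphisms. For a fixed pair $(\alpha,\beta)$ of permutations of $A,B$ moving $t\ge 1$ points in total, a cross-pair $\{a,b\}$ is fixed only if both $a$ and $b$ are fixed; counting the moved cross-pairs shows that at least $c\,t\,n$ of them lie in $\langle(\alpha,\beta)\rangle$-orbits of size $\ge 2$ once $|A|,|B|\ge n/3$, so the number of cross-patterns invariant under $(\alpha,\beta)$ is at most $2^{\,k(n-k)-c\,t\,n/2}$. Summing $2^{-c\,t\,n/2}$ against the at most $n^{t}$ choices of such $(\alpha,\beta)$ and then over $t$ yields a geometric series with ratio $n\,2^{-cn/2}\to 0$, so the expected number of nontrivial side-preserving automorphisms tends to $0$ uniformly over the balanced range of $k$.

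It then remains to rule out side-swapping automorphisms, which can occur only when $|A|=|B|=k$. A fixed pair of bijections $\phi\colon A\to B$ and $\psi\colon B\to A$ induces the permutation $\{a,b\}\mapsto\{\psi(b),\phi(a)\}$ of the $k^2$ cross-pairs, whose fixed points correspond to fixed points of $\psi\phi$ and so number at most $k$; hence at least $(k^2-k)/2$ orbits have size $\ge 2$, and the probability that $(\phi,\psi)$ is realized is at most $2^{-(k^2-k)/2}$. Since there are only $(k!)^2=2^{O(n\log n)}$ such pairs and $k=n/2$, the expected number of side-swaps is $2^{\,O(n\log n)-\Omega(n^2)}=o(1)$. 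Combining the three estimates—negligible mass on small $k$, no side-preserving automorphisms, and no side-swaps—shows the fraction of asymmetric spaces tends to $1$.

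The main obstacle I anticipate is not the orbit-counting itself, which is the standard argument, but the bookkeeping forced by the variable partition size. I must show the mass concentrates on near-balanced partitions so that $|A|,|B|=\Theta(n)$ and the lower bound on moved cross-pairs is genuinely linear in $t$, and I must control the extra side-swapping symmetries that have no analogue in the ordinary random-graph setting. Making the side-preserving estimate uniform across the whole balanced range of $k$, rather than at a single value, is the step that will require the most care.
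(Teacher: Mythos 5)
Your proposal is correct, and at the level of overall architecture it is the same proof as the paper's: a first-moment, orbit-counting argument in the spirit of \cite{NesConstraint}, organized as (i) the unbalanced partitions carry a vanishing fraction of the count in Theorem~\ref{thm:labeledsecondcount}, (ii) side-preserving automorphisms are ruled out by counting orbits of the induced permutation on cross-pairs, and (iii) side-swapping automorphisms, possible only at $k=n/2$, are ruled out by the observation that the fixed cross-pairs correspond to fixed points of the composed bijection and hence number at most $k$ (your bound of $(k!)^2\,2^{-(k^2-k)/2}$ is literally the paper's $2^{r'}$ computation with $r'=\frac{n^2}{8}+\frac{n}{4}$). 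The genuine difference is in step (ii). The paper asserts that every nontrivial automorphism moves at least $cn$ vertices, deduces from this a uniform orbit deficit of order $n^2$, and multiplies by the crude total $k!(n-k)!\le n^n$ of permutations; but that assertion is justified by an averaging statement over the random space (about half of the opposite part distinguishes $x$ from $y=\phi(x)$ ``on average''), and as a deterministic statement about permutations it fails---a transposition moves only two points and fixes $2^{k(n-k)-(n-k)}$ cross-patterns, a deficit linear rather than quadratic in $n$. Your stratification by the number $t$ of moved points, with deficit $c\,t\,n/2$ and the geometric series $\sum_{t\ge 2}\bigl(n\,2^{-cn/2}\bigr)^t\to 0$, is exactly the bookkeeping needed to make this step airtight, and it is uniform over the whole balanced range $k\ge n/3$ rather than tied to the paper's parametrization $k=\frac{n}{2}(1-\epsilon)$. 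So your route buys full rigor and uniformity at the one place where the paper's write-up is loose, at the cost of the per-$t$ accounting you already flag as the delicate step; the paper's version, in exchange, is shorter but leans on an in-expectation heuristic where a deterministic orbit bound is what is actually required.
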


\begin{proof}
%
Our argument uses a basic probabilistic argument. We need therefore to define the notion of a \textit{random} metric space in $\secondaclass$. 

Assume a collection $V$ of $n$ vertices is broken into two parts, $V_1$ and $V_2$. The distances among parts must be $2$. Assign distances between parts by assigning distance $1$ and distance $3$ with equal probability. That is, for $v_1 \in V_1$ and $v_2 \in V_2$, we say
$$\Pr[d(v_1,v_2) = 1] = \frac{1}{2} \hspace{1cm} \text{and} \hspace{1cm} \Pr[d(v_1,v_2) = 3] = \frac{1}{2}.$$\\

We will estimate the number of metric spaces in $\secondaclass$ which would be fixed by a given nontrivial permutation, and we will also estimate the number of nonidentity permutations which are automorphisms of a random metric space in $\secondaclass$. The product of these two provides an upper bound on the number of metric spaces in $\secondaclass$ which admit a nontrivial automorphism, and from there we can calculate the probability that a metric space in $\secondaclass$ admits such an automorphism. We will ultimately show that this probability goes to $0$.\\ 

Let $\phi: V \rightarrow V$ be a nonidentity permutation, and let $A$ be a random metric space in $\secondaclass$ with parts $V_1,V_2$. We fix $x \in V$ such that $\phi(x) = y$, where $y \neq x$. We define $V'$ to be the edges between parts, that is, $V' = \big\{ \{v_1 , v_2\}\big\}_{v_1 \in V_1, v_2 \in V_2}$ and define $\phi':V' \rightarrow V'$ by $\phi'(\{v_1,v_2\}) = \{\phi(v_1),\phi(v_2)\}.$ Observe that $\phi'$ is a permutation.

We examine first the case $k = \frac{n}{2} (1 - \epsilon)$, $\epsilon \neq 0$. In particular, this implies that $x$ and $y$ are on the same side. Given a vertex $v$ on the opposite side of $x$ and $y$, there are four possibilities for the pair of distances $\{d(x,v),d(y,v)\}$: 
\begin{itemize}
    \item $d(x,v) = d(y,v) = 1$;
    \item $d(x,v) = d(y,v) = 3$;
    \item $d(x,v) = 1$ and $d(y,v) = 3$;
    \item $d(x,v) = 3$ and $d(y,v) = 1$.
\end{itemize}
As each of these possibilities are equally likely, and as there are at least $\frac{n}{2}(1 - \epsilon)$ vertices on the side opposite $x$ and $y$, we have that there are at least $\frac{n}{4}(1 - \epsilon)$ vertices for which its distance between $x$ and the distance between $y$ are unequal. All of these vertices must be moved by $\phi$, so there are at least $\frac{n}{4}(1 - \epsilon) \geq cn$ nonfixed points of $\phi$. This implies that there are at least $$cn(n-k) \geq c'n^2$$ nonfixed points of $\phi'$.

This in turn implies that there are at most $$r := k(n-k) - \frac{c'}{2}n^2$$ orbits of $\phi'$.

The edges within a single orbit of $\phi'$ must be all $1$ or all $3$. Thus there are at most $2^r$ metric spaces in $\secondaclass$ with parts $V_1,V_2$ which admit $\phi$ as an automorphism. As $k \neq n-k$, there are at most $k! (n-k)!$ nontrivial automorphisms on $A$.


Combining these two bounds, and recalling that for a fixed $n,k$ there are $2^{k(n-k)}$ spaces in $\secondaclass$ with parts of size $k,n-k$, we get that for a fixed $n,k$, $k \neq n-k$, $k = \frac{n}{2}(1-\epsilon)$, the probability of a metric space in $\secondaclass$ with parts of size $k$ and $n-k$ respectively admitting a nontrivial automorphism is
\begin{align*}
\frac{k!(n-k)!2^r}{2^{k(n-k)}} .
\end{align*}

We expand our argument now to consider all $k \leq \lfloor \frac{n}{2} \rfloor$.

The contribution to our calculation from small $k$, i.e. $k < \frac{n}{2}(1-\epsilon)$, will be much smaller than that from relatively large $k$, and therefore will not affect our limit. Thus for small $k$ we use the largest possible upper bound, that is, we say that there are no more than $2^{k(n-k)}$ metric spaces which admit a fixed nontrivial automorphism $\phi$.

This gives us that the probability that a random metric space in $\secondaclass$ on $n$ vertices admits a nontrivial automorphism is
$$\frac{\sum\limits_{k < \frac{n}{2}(1-\epsilon)} 2^{k(n-k)} + {\sum\limits_{\substack{k = \frac{n}{2}(1-\epsilon)\\k\neq \frac{n}{2}}} k!(n-k)!2^r} + (\frac{n}{2}!)^2 \psi(\phi,n,\frac{n}{2})}{\sum\limits_{k=0}\limits^{\lfloor \frac{n}{2}\rfloor}2^{k(n-k)}} $$
where the last term in the numerator corresponds to the case $k = \frac{n}{2}$. In this term, $\psi(\phi,n,\frac{n}{2})$ is an upper bound on the number of spaces on $n = 2k$ vertices in $\secondaclass$ which are fixed by a single automorphism $\phi$. Thus, we consider now three separate cases:
\begin{enumerate}
    \item\label{item:smallk} $k < \frac{n}{2}(1-\epsilon)$;
    \item\label{item:mediumishk} $k = \frac{n}{2}(1-\epsilon), k \neq \frac{n}{2}$;
    \item\label{item:exactlymediumk} $k = \frac{n}{2}$.
\end{enumerate}

We separate the three terms in the numerator, and show that each limit goes to zero.

\textit{Term} (\ref{item:smallk})
\begin{equation*}
    \frac{\sum\limits_{k< \frac{n}{2}(1-\epsilon)} 2^{k(n-k)}}{\sum\limits_{k=0}^{\lfloor \frac{n}{2}\rfloor} 2^{k(n-k)}}
\end{equation*}
Each term in the numerator is bounded above by $$2^{[\frac{n}{2}(1-\epsilon)][n-(\frac{n}{2}(1-\epsilon))]}$$ and therefore the numerator is bounded by $$\frac{n}{2} 2^{\frac{n^2}{4}-\frac{n^2}{4}\epsilon^2}.$$
If we only keep the largest term in the denominator, we bound it below and therefore get a lower bound on the denominator $$2^{\lfloor \frac{n}{2}\rfloor \lceil \frac{n}{2} \rceil} > 2^{\frac{n^2}{4}-1}.$$

Therefore Term (\ref{item:smallk}) is bounded above by $$\frac{\frac{n}{2} 2^{\frac{n^2}{4}(1-\epsilon^2)}}{2^{\frac{n^2}{4}-1}} = \frac{n}{2^{\epsilon^2 \frac{n^2}{4}}}$$
which goes to $0$ as $n$ goes to infinity.\\

\textit{Term} $(\ref{item:mediumishk})$:
$$\frac{\sum\limits_{k = \frac{n}{2}(1-\epsilon)} k!(n-k)!2^r}{\sum\limits_{k=0}^{\lfloor \frac{n}{2} \rfloor} 2^{k(n-k)}}$$
For any $k,n$, we have that $k!(n-k)! \leq n^n$, and therefore $$\sum\limits_{k = \frac{n}{2}(1-\epsilon)} k!(n-k)! 2^r \leq n^n \sum\limits_{k = \frac{n}{2}(1-\epsilon)}2^r.$$ 
Recall that $r = \frac{n^2}{4}-c'n^2$, where $c'$ may depend on $k,n$. For a fixed $n$, let $\tilde{c} = \min\limits_{k \in \frac{n}{2}(1-\epsilon)}c'.$ Then 
$$\sum\limits_{k = \frac{n}{2}(1-\epsilon)}2^r \leq \sum\limits_{k=\frac{n}{2}(1-\epsilon)}2^{\frac{n^2}{4}-\tilde{c}n^2} = \frac{n}{2}2^{\frac{n^2}{4}-\tilde{c}n^2}.$$
Thus Term $(\ref{item:mediumishk})$ is bounded above by
$$\frac{n^n \frac{n}{2} 2^{\frac{n^2}{4}-\tilde{c}n^2}}{\sum\limits_{k=0}^{\lfloor \frac{n}{2} \rfloor} 2^{k(n-k)}}.$$
Again, we can bound the denominator below by $2^{\frac{n^2}{4}-1}$ and therefore Term $(2)$ is no more than $$\frac{n^{n+1}}{2^{\tilde{c}n^2}}$$ which goes to $0$ as $n$ goes to $\infty$.\\

\textit{Term} $(\ref{item:exactlymediumk})$:
$$\frac{(\frac{n}{2}!)^2 \psi(\phi,n,\frac{n}{2})}{\sum\limits_{k=0}^{ \frac{n}{2}} 2^{k(n-k)}}$$
We consider the fixed nontrivial automorphism $\phi$ acting on a space $A \in \mathcal{A}$ on $n$ vertices with parts $V_1,V_2$ where $|V_1| = k = \frac{n}{2}$ and $|V_2| = n-k = \frac{n}{2}$.

If $\phi$ switches the two parts $V_1,V_2$, then as with the argument for Term (\ref{item:mediumishk}) there are at most $2^{\frac{n^2}{4}-c n^2}$, for some constant $c$, metric spaces in $\secondaclass$ which admit $\phi$.


If $\phi$ switches the two parts $V_1,V_2$, then let $x \in A$ be such that $\phi(x) = y$ for some $y$ not in the same part as $x$.

We again define $V'$ to be all the edges between parts, and we define $\phi':V' \rightarrow V'$ by $\phi'(\{v_1,v_2\}) = \{\phi(v_1),\phi(v_2)\}$.

There is at most one edge containing $x$ which is fixed by $\phi'$, namely $\{x,\phi(x)\}$. Thus there are at most $\frac{n}{2}$ fixed edges between $V_1$ and $V_2$, Since there are $(\frac{n}{2})^2$ total edges between $V_1$ and $V_2$, then there are at least $\frac{n^2}{4} - \frac{n}{2}$ edges which are not fixed by $\phi'$.

Therefore there are at most
$$\frac{n^2}{4} - \frac{1}{2}\Big(\frac{n^2}{4}-\frac{n}{2}\Big) = \frac{n^2}{8} + \frac{n}{4} =: r'$$ orbits of $\phi'$.

This gives us that there are at most $2^{r'}$ such metric spaces $A$ which admit $\phi$ as an automorphism.

As we can again bound the denominator of term $(\ref{item:exactlymediumk})$ below by $2^{\frac{n^2}{4}-1}$, we have an upper bound for term $(\ref{item:exactlymediumk})$ of
\begin{align*}
    \frac{k^2 [2^{\frac{n^2}{4}-c n^2} + 2^{r'}]}{2^{\frac{n^2}{4}-1}} &= \frac{\frac{n^2}{4}[ 2^{\frac{n^2}{4}-c n^2} + 2^{r'}]}{2^{\frac{n^2}{4}-1}}\\
    &= \frac{\frac{n^2}{4} [2^{\frac{n^2}{4}-c n^2} + 2^{\frac{n^2}{8}+\frac{n}{4}}]}{2^{\frac{n^2}{4}-1}}\\
    &= \frac{n^2}{2^{c n^2 +1}} + \frac{n^2 2^{\frac{n}{4}}}{2^{\frac{n^2}{8}+1}}.
\end{align*}
This again goes to $0$ as $n$ goes to $\infty$.\\

We have therefore shown that the probability of a random metric space in $\secondaclass$ on $n$ vertices admits a nontrivial automorphism asymptotically goes to $0$. Hence almost all metric spaces in $\secondaclass$ are asymmetric.
\end{proof}

Thus we have the following.

\begin{corlemma}\label{cor:asymmetricimplies01}
The amalgamation class $\secondaclass$ satisfies a labeled 0--1 law.
\end{corlemma}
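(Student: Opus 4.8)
The plan is to transfer the unlabeled 0--1 law of Theorem \ref{thm:UnlabeledSecond01} to the labeled setting using rigidity, rather than attempting a direct count of the labeled spaces satisfying each axiom in $\Phi$. Write $L_n$ for the number of labeled $n$-vertex spaces in $\secondaclass$, and for an isomorphism-invariant property $S$ write $L_n(S)$ for the number of labeled spaces with that property; similarly let $I_n$ and $I_n(S)$ denote the corresponding counts of isomorphism classes. The basic identity is that each isomorphism class $[A]$ on $n$ vertices contributes exactly $n!/|\mathrm{Aut}(A)|$ labeled spaces, so that $L_n(S) = \sum_{[A] \models S} n!/|\mathrm{Aut}(A)|$.

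First I would separate each of these weighted sums into its asymmetric and symmetric parts. An asymmetric class contributes the full weight $n!$, so the asymmetric contribution to $L_n(S)$ is exactly $n!\, I_n^{\mathrm{asym}}(S)$, where $I_n^{\mathrm{asym}}(S)$ counts the asymmetric classes with property $S$. The preceding lemma, that almost all spaces in $\secondaclass$ are asymmetric, gives $L_n^{\mathrm{sym}}/L_n \to 0$, and hence $n!\, I_n^{\mathrm{asym}} \sim L_n$ and $L_n^{\mathrm{sym}}(S) = o(L_n)$. Combining these yields
\[
\frac{L_n(S)}{L_n} = \frac{n!\, I_n^{\mathrm{asym}}(S) + L_n^{\mathrm{sym}}(S)}{L_n} = \frac{I_n^{\mathrm{asym}}(S)}{I_n^{\mathrm{asym}}} \cdot \frac{n!\, I_n^{\mathrm{asym}}}{L_n} + o(1),
\]
so that $\lim_n L_n(S)/L_n = \lim_n I_n^{\mathrm{asym}}(S)/I_n^{\mathrm{asym}}$ whenever the right-hand limit exists.

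The remaining step is to check that restricting to asymmetric classes does not change the unlabeled proportion, i.e. that $I_n^{\mathrm{asym}}(S)/I_n^{\mathrm{asym}}$ and $I_n(S)/I_n$ share the same limit; this reduces to $I_n^{\mathrm{sym}} = o(I_n^{\mathrm{asym}})$. Here is where the \emph{strength} of the asymmetry bound matters: since $I_n^{\mathrm{sym}} \le L_n^{\mathrm{sym}}$ while $I_n^{\mathrm{asym}} = (L_n - L_n^{\mathrm{sym}})/n!$, one has $I_n^{\mathrm{sym}}/I_n^{\mathrm{asym}} \lesssim n!\, L_n^{\mathrm{sym}}/L_n$. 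The bounds obtained in the preceding lemma decay like $2^{-cn^2}$ (up to factors of $n^n$), which comfortably dominates the $n! = 2^{O(n \log n)}$ factor, so this ratio still tends to $0$. Taking $S$ to be the property ``$\models \phi$'' for a fixed sentence $\phi$ and invoking Theorem \ref{thm:UnlabeledSecond01}, the unlabeled proportion $I_n(S)/I_n$ tends to $0$ or $1$; by the chain of equalities above the labeled proportion tends to the same value, which is exactly the labeled 0--1 law.

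The main obstacle I anticipate is precisely this last bookkeeping: confirming that the rigidity estimate is quantitatively strong enough to survive multiplication by $n!$ when passing between labeled and unlabeled counts. Everything else is formal, but this point is what makes the transfer legitimate rather than merely plausible, and it is the reason the superpolynomial ($2^{-cn^2}$) decay in the asymmetry lemma, and not just the bare statement ``probability $\to 0$,'' is what is actually needed.
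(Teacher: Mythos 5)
Your proposal is correct and is essentially the paper's own argument: the paper's proof of this corollary simply states that the labeled 0--1 law ``follows immediately'' from the unlabeled 0--1 law (Theorem \ref{thm:UnlabeledSecond01}) together with the asymmetry lemma, and your orbit-counting transfer via $n!/|\mathrm{Aut}(A)|$ is exactly the formalization of that one-line deduction. Your observation that the transfer genuinely needs the quantitative $2^{-cn^2}$-type decay (to survive multiplication by $n!$ when passing between labeled and unlabeled counts) is a point the paper leaves implicit, but the bounds in its asymmetry lemma do supply it, so there is no gap.
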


\begin{proof}
This follows immediately from the fact that $\secondaclass$ satisfies an unlabeled 0--1 law and that almost all spaces in $\secondaclass$ are asymmetric.
\end{proof}

\section{Comparing almost sure theories and generic theories}\label{sec:asNgen}

We now have all the necessary ingredients to prove our last two main theorems.

\begin{proof}[Proof of Theorem \ref{thm:firstASnGen}]
Theorem \ref{thm:firstUnlabeled01} and Lemma \ref{lemma:firstLabeled01} show that $\firstaclass$ satisfies both an unlabeled and a labeled 0--1 law. 

Since the generic theory of $\firstGamma$ contains the sentence
\begin{align*}
    \forall u \exists v \ni d(u,v) = 3
\end{align*}
and the almost sure theory does not, we have that these two theories are not the same.
\end{proof}

\begin{proof}[Proof of Theorem \ref{thm:secondASnGen}]
%
Theorem \ref{thm:UnlabeledSecond01} and Corollary \ref{cor:asymmetricimplies01}   show that $\secondaclass$ satisfies a 0--1 law. 

It follows immediately upon observation that $\secondGamma$ satisfies $\Phi$, the axiomatizing set for the almost sure theory of $\secondaclass$.
Hence the almost sure theory of $\secondaclass$ matches the generic theory of $\secondGamma$.
\end{proof}

\section{Further Work}\label{sec:furtherwork}
\begin{enumerate}
    \item Do this analysis for more the finite diameter bipartite metrically homogeneous graphs of generic type. Are there necessary and sufficient conditions for determining when the almost sure theory will diverge from the generic theory?
    \item Establish 0--1 laws for the other metrically homogeneous graphs of generic type of diameter $3$. Compare the almost sure theories and generic theories and again try to find necessary and sufficient conditions for when the almost sure theories and the generic theories will diverge.\\
    \emph{We note here that the bipartite graphs are somewhat exceptional, and that we may reasonably expect different behavior from other metrically homogeneous graphs of generic type.}
\end{enumerate}

\section{Acknowledgements}
Sincere thanks to Nathan Fox, Sam Braunfeld, Rehana Patel, and Alex Kruckman for their feedback and assistance.

\bibliographystyle{amsalpha}
\bibliography{References}\label{sec:biblio}

\providecommand{\bysame}{\leavevmode\hbox to3em{\hrulefill}\thinspace}
\providecommand{\MR}{\relax\ifhmode\unskip\space\fi MR }
\providecommand{\MRhref}[2]{%
  \href{http://www.ams.org/mathscinet-getitem?mr=#1}{#2}
}
\providecommand{\href}[2]{#2}
\begin{thebibliography}{ACM16}

\bibitem[ACM16]{ACM-MH3}
D.~Amato, G.~Cherlin, and H.~D. Macpherson, \emph{Metrically homogeneous graphs
  of diameter 3}, Preprint, 2016.

\bibitem[Ahl16]{Ahlman}
{\"{O}}.~Ahlman, \emph{Simple structures axiomatized by almost sure theories},
  Annals of pure and applied logic \textbf{167} (2016), no.~5, 435--456.

\bibitem[Cam76]{Cam_TRANS}
P.~J. Cameron, \emph{Transitivity of permutation groups on unordered sets},
  Math. Z. \textbf{148} (1976), 127--139.

\bibitem[Che17]{CheCat}
G.~Cherlin, \emph{Homogeneous ordered graphs and metrically homogeneous
  graphs}, \url{http://sites.math.rutgers.edu/~cherlin/Paper/_H2017Full.pdf},
  2017, [Online; accessed 10-December-2018].

\bibitem[Fag76]{Fagin}
R.~Fagin, \emph{Probabilities on finite models}, J. of Symbolic Logic
  \textbf{41} (1976), no.~1, 50--58.

\bibitem[Gar80]{Gard}
A.~Gardiner, \emph{Redrawing distance-regular graphs}, Unpublished manuscript,
  1980.

\bibitem[Hil]{Hill}
Cameron~D. Hill, \emph{Remarks on almost-sure theories of fraisse classes},
  \url{cdhill.faculty.wesleyan.edu/files/2017/10/RemarksOnASTheories.pdf},
  Accessed: 2019-04-12.

\bibitem[Kru19]{Kruck}
A.~Kruckman, \emph{Disjoint $n$-amalgamation and pseudofinite countably
  categorical theories}, Notre Dame J. of Formal Logic \textbf{60} (2019),
  no.~1, 139--160.

\bibitem[LW80]{LaW-HG}
A.~Lachlan and R.~Woodrow, \emph{Countable ultrahomogeneous undirected graphs},
  Transactions of the American Mathematical Society \textbf{262} (1980),
  51--94.

\bibitem[Ne{\v{s}}11]{NesConstraint}
J.~Ne{\v{s}}et{\v{r}}il, \emph{Background lecture notes},
  \url{https://www.fields.utoronto.ca/programs/scientific/11-12/constraint/summerschool/nesetril.pdf},
  June 2011, [Online; accessed 27-April-2020].

\bibitem[PK87]{KPR}
B.L.~Rothschild P.G.~Kolaitis, H.J.~Pr{\"{o}}mel, \emph{$k_{\ell+1}$-free
  graphs: Asymptotic structure and a 0,1-law}, Transactions of the American
  Mathematical Society \textbf{303} (1987), no.~2, 637--671.

\bibitem[She74]{Shee}
J.~Sheehan, \emph{Smoothly embeddable subgraphs}, J. London Math. Soc.
  \textbf{9} (1974), 212--218.

\end{thebibliography}

\end{document}